\def\misajour{September 26, 2011}
\newtheorem{theorem}{Theorem} [section]
\newtheorem{proposition}[theorem]{Proposition} 
\newtheorem{corollary}[theorem]{Corollary} 
\newtheorem{lemma}[theorem]{Lemma}
\newtheorem*{rem}{Remark}
\def\A{\mathbf{A}} 
\def\C{\mathbf{C}} 
\def\G{\mathbf{G}} 
\def\P{\mathbf{P}} 
\def\Q{\mathbf{Q}} 
\def\bu{\mathbf{u}}
\def\bv{\mathbf{v}}
\def\bV{\mathbf{V}}
\def\Z{\mathbf{Z}}
\newcommand{\ZK}{\Z_K}
\def\calE{\mathcal {E}}
\def\calO{\mathcal {O}}
\def\calS{\mathcal {S}}
\def\Pic{\mathrm {Pic}}
\def\Card{\mathrm {Card}}
\def\Id{{\mathrm{Id}}} 
\def\GL{{\mathrm {GL}}}
\newcommand{\OS}{O_S}
\newcommand{\OSprime}{O_{S'}}
\newcommand{\Etilde}{\widetilde{E}}
\def\pgoth{{\mathfrak p}}
\def\mgoth{{\mathfrak m}}
\def\mgothpgoth{\mgoth_{\pgoth}}
\def\house#1{\setbox1=\hbox{$\,#1\,$}%
\dimen1=\ht1 \advance\dimen1 by 2pt \dimen2=\dp1 \advance\dimen2 by 2pt
\setbox1=\hbox{\vrule height\dimen1 depth\dimen2\box1\vrule}%
\setbox1=\vbox{\hrule\box1}%
\advance\dimen1 by .4pt \ht1=\dimen1
\advance\dimen2 by .4pt \dp1=\dimen2 \box1\relax}
\def\denominator{d}
\begin{document}

\begin{center}
{\Large{\bf 
Some remarks on diophantine equations 
\\
and diophantine approximation }}

\medskip
 \bigskip

\large

Claude LEVESQUE and Michel WALDSCHMIDT
\end{center} 

\bigskip

\centerline{{\it Dedicated to professor H\`a Huy Kho\'ai.}} 
\bigskip

{\sc Abstract.} We first recall the connection, going back to A.~Thue, between rational approximation to algebraic numbers and integer solutions of some Diophantine equations. Next we recall the equivalence between several finiteness results on various Diophantine equations. We also give many equivalent statements of Mahler's generalization of the fundamental theorem of Thue. In particular, we show that the theorem of Thue--Mahler for degree $3$ implies the theorem of Thue--Mahler for arbitrary degree $\ge3$, and we relate it with a theorem of Siegel on the rational integral points of the projective line $\P^1(K)$ minus $3$ points. Finally we extend our study to higher dimensional spaces in connection with Schmidt's Subspace Theorem. 

{\sc Classification MSC 2010}: 11D59; 11J87; 11D25 

{\sc Keywords} Diophantine equations, Diophantine approximation, Thue curves, Siegel's theorem, Integral points, Thue--Mahler equations, $S$--unit equations, Schmidt's Subspace Theorem. 

\section{Introduction}\label{S:Introduction}

The fundamental theorem of Thue obtained in 1908--1909 can be stated equivalently (Proposition $\ref{Proposition:Thue}$) as a result about the finiteness of the set of integral points on an algebraic curve, or as a result of diophantine approximation of algebraic numbers by rational numbers improving Liouville's inequality. Over a number field $K$, Thue's result on Diophantine equations is equivalent (Proposition $\ref{Proposition:detmKyoto}$) with finiteness statements on the number of integral points on Thue curves, Mordell curves, elliptic curves, hyperelliptic curves, superelliptic curves, and also to the finiteness of the set of solutions of the unit equation $E_1+E_2=1$, where the unknowns $E_1,E_2$ take their values in the group of units of $K$. 

In Proposition $\ref{Proposition:equivalence}$, we will give many equivalent statements of a generalization of this theorem of Thue by Mahler. In particular, we will show that the theorem of Thue--Mahler for degree $3$ implies the theorem of Thue for arbitrary degree $\ge3$, and we will relate it with a theorem of Siegel on the integral points of the projective line $\P^1(K)$ minus $3$ points. 
We remark that Siegel's theorem has been generalized by Vojta for the integral points on a projective variety minus a divisor. 
 Vojta’s proof rests on the Subspace theorem of Schmidt and comes also into play in the work of H\`a Huy Kho\'ai
 \cite{MR1247071,MR1474954}. 
 We shall use Vojta's result only in the special case where the variety is a projective space $\P^n(K)$ and the divisor is a union of hyperplanes, in which case it is equivalent to the finiteness of the set of solutions of a generalized $S$--unit equation
 (see Proposition $\ref{Proposition:EquivalenceUnitEqnHyperplans}$).

\section{Rational approximation and diophantine equations } \label{S:ApproximationEquations}

The following link, between the rational approximation on the one hand and the finiteness of the set of solutions of some diophantine equations on the other hand, happens to be well known thanks to the work of A. Thue.
 
\begin{proposition} \label{Proposition:Thue}
 Let $f\in\Z[X]$ be an irreducible polynomial of degree $d$ and let $F(X,Y)=Y^df(X/Y)$ be the 
 associated homogeneous binary form of degree $d$. Then the following two assertions are equivalent:
\par 
$(i)$ For any integer $k\neq 0$, the set of $(x,y)\in\Z^2$ verifying 
\begin{equation}\label{Equation: Thue(i)}
F(x,y)=k
\end{equation}
is finite. 
\par 
$(ii)$ For any real number $\kappa>0$ and for any root $\alpha\in\C$ of $f$, the set of rational numbers $p/q $ verifying 
\begin{equation}\label{Equation:Thue(ii)}
\left| \alpha-\frac{p}{q}\right| \le \frac{ \kappa }{q^{d}} 
\end{equation}
is finite.
\end{proposition}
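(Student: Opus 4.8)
The plan is to prove the two implications separately, exploiting the homogeneity relation $F(x,y)=y^d f(x/y)$ and the factorization of $F$ over $\C$ coming from the roots of $f$.

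First I would establish $(ii)\Rightarrow(i)$. Suppose some equation $F(x,y)=k$ has infinitely many integer solutions $(x,y)$. Only finitely many have $y=0$ (since then $F(x,0)=a_d x^d=k$ for the leading coefficient $a_d$), so I may assume $|y|\to\infty$ along an infinite subsequence; note $|x|$ must also be comparable to $|y|$, since $|F(x,y)|=|k|$ forces $x/y$ to stay bounded. Writing $f(X)=a_d\prod_{i=1}^d(X-\alpha_i)$ over $\C$, homogeneity gives
\begin{equation*}
F(x,y)=a_d\prod_{i=1}^d(x-\alpha_i y),
\end{equation*}
so $\prod_{i=1}^d|x-\alpha_i y|=|k|/|a_d|$. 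As $|y|\to\infty$ while $x/y$ stays bounded, one of the factors, say the one attached to a root $\alpha=\alpha_{i_0}$ nearest to $x/y$, must be small; the other factors $|x-\alpha_i y|$ stay bounded below by a positive multiple of $|y|$ (because the $\alpha_i$ are distinct, so $|\alpha_{i_0}-\alpha_i|>0$ separates the nearest root from the rest). Hence $|x-\alpha y|\le C/|y|^{d-1}$ for a constant $C$ independent of the solution, i.e. $|\alpha-x/y|\le C/|y|^{d}$. This produces infinitely many rationals $p/q=x/y$ satisfying \eqref{Equation:Thue(ii)} with $\kappa=C$ (and distinct, since $|y|\to\infty$), contradicting $(ii)$. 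Here I must handle the bookkeeping that the $p/q$ be genuinely distinct and reduced, but passing to an infinite subsequence with $|y|$ strictly increasing takes care of that.

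For $(i)\Rightarrow(ii)$, I would argue contrapositively, starting from infinitely many rationals $p/q$ with $|\alpha-p/q|\le\kappa/q^d$ for a fixed root $\alpha$ and $\kappa>0$. I may take the $p/q$ in lowest terms with $q\ge1$ and $q\to\infty$. Using the same factorization, I estimate $|F(p,q)|=|a_d|\prod_i|p-\alpha_i q|$. For the nearby root $\alpha$, $|p-\alpha q|=q|\alpha-p/q|\le\kappa/q^{d-1}$; for each other root $\alpha_i$, the triangle inequality gives $|p-\alpha_i q|\le|p-\alpha q|+|\alpha-\alpha_i|q\le C'q$ once $q$ is large. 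Multiplying, $|F(p,q)|\le |a_d|\cdot(\kappa/q^{d-1})\cdot(C'q)^{d-1}=O(1)$, so $F(p,q)$ takes integer values in a bounded set. By pigeonhole, some integer value $k$ is attained infinitely often; since $F(p,q)\neq0$ (as $f$ is irreducible of degree $\ge1$, it has no rational root, so $F(p,q)=q^df(p/q)\neq0$), we get $k\neq0$, contradicting the finiteness in $(i)$ for that $k$.

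The main obstacle, and the step deserving care, is the two-sided control of the distance from $x/y$ (resp.\ $p/q$) to the \emph{non}-nearby roots: I need that exactly one linear factor can be small and that the product of the remaining factors is pinned between positive constant multiples of $|y|^{d-1}$. This relies essentially on the roots $\alpha_i$ being \emph{distinct}, which holds because $f$ is irreducible over $\Q$ hence separable. A degenerate case to dispose of is $d=1$, where $F$ is linear and both statements are easily seen to be vacuous or trivial; for $d\ge2$ the estimates above are the substance. I would also make explicit that the constants $C,C'$ depend only on $f$ (its coefficients and roots) and not on the individual solution, which is what lets the finiteness transfer in both directions.
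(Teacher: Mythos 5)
Your proposal is correct and takes essentially the same route as the paper: both directions rest on the factorization of $F$ into linear factors over $\C$, the triangle-inequality separation of the non-nearest roots, and the bound $|F(p,q)|=O(1)$ for good approximations, your contrapositive phrasing and pigeonhole step merely replacing the paper's direct argument and its restatement of $(i)$ as finiteness of the set where $0<|F(x,y)|\le k$. One small caveat: for $d=1$ the two assertions are not ``vacuous or trivial'' but both \emph{false} (the paper exhibits explicit infinite families of solutions and of approximations), which still leaves them equivalent, so your conclusion is unaffected.
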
 

Condition $(i)$ can also be phrased by stating that for any positive integer $k$, the set of $(x,y)\in\Z^2$ verifying 
$$
0<|F(x,y)|\le k
$$
is finite.

Before proceeding with the proof, a few remarks are in order. When we consider an element $p/q\in\Q$, it should be understood that $p$ and $q$ are integers with $q>0$ and that  if $p=0$ then $q=1$. Moreover, the set defined in the assertion $(ii)$ would be the same if we added the condition $\gcd(p,q)=1$. 
\bigskip

In the case when $d=1$, the two assertions are false. As a matter of fact, if we write $f(X)=a_0X+a_1$ with $a_0\not= 0$, for $k=a_0$ the equation $a_0X+a_1Y=k$ has an infinite number of solutions $(x,y)$: 
$$
x=na_1+1,\quad y=-na_0 \qquad \hbox{with}\quad n\in\Z,
$$
and for $\kappa=|a_1|/a_0$ the root $\alpha=-a_1/a_0$ of $f$ has an infinite number of approximations $p/q$ satisfying $(\ref{Equation:Thue(ii)})$ with $\gcd(p,q)=1$, namely when 
$$
\frac{p}{q}=\frac{-na_1}{na_0-1}
$$
for all integers $n >0$ (with $n>1$ whenever $a_0=1$). 
\bigskip

In the case when $d=2$, the two assertions can be true, take for instance $f(X)=X^2+a$ with $a\in\Z$, $a>0$, 
and both of them can also be false, 
 take for instance $f(X)=X^2-a$ with $a\in\Z$, $a>0$ squarefree. For $d\ge 3$,
we know, since the work of Thue, that these two assertions are true. The statement in $(ii)$ with $d\geq 3$ is the first improvement of the Liouville inequality and  was obtained by Thue in a stronger form with $(\ref{Equation:Thue(ii)})$ replaced by 
$$
\left| \alpha-\frac{p}{q}\right| \le \frac{ \kappa(\epsilon) }{q^{(d/2)+1+\epsilon}} 
$$ 
for any $\epsilon>0$ 
(\cite{MR0249355}, Chap.~6 ; 
  \cite{MR0568710}, Chap.~V \S3; 	
 \cite{MR88h:11002}, Chap.~5; 
 \cite{SerreMordellWeil}, \S7.2; %
  \cite{UZ1}, Chap.~1, \S2; 
   \cite{1115.11034}; 
 \cite{UZ2}, Chap.~2). 
It gave birth to the works of C.L. Siegel, F. Dyson, Th. Schneider, K.F. Roth and W.M. Schmidt, culminating with the Subspace theorem, including a number of variations with a lot of applications
   (\cite{MR88h:11002}, Chap.~5; 
  \cite{MR93a:11048}, Chap.~IX \S7; 
 \cite{SerreMordellWeil}, \S7.2;  %
 \cite{UZ1}, Chap.~1, \S6;  
 \cite{1115.11034}). 

 The proof of Proposition $\ref{Proposition:Thue}$ is effective: from an explicit upper bound for the heights of the exceptions $(x,y)$ in statement $(i)$, one deduces
an explicit upper bound for the exceptions $q$ in statement $(ii)$, and conversely. Such explicit upper bounds are known

 \smallskip
 
\par
\begin{proof}[Proof of Proposition $\ref{Proposition:Thue}$]
Write 
$$
f(X)=a_0X^d+a_1X^{d-1}+\cdots + a_{d-1}X+a_d
$$
and
$$
F(X,Y)= a_0X^d+a_1X^{d-1}Y+\cdots + a_{d-1}XY^{d-1}+a_dY^d.
$$
Without loss of generality we may assume $a_0>0$. 
\par 
$(1)$ Suppose now that the assertion $(i)$ is true. Consider a root $\alpha$ of $f$, a number $\kappa >0$ 
and a rational number $p/q$ verifying $(\ref{Equation:Thue(ii)})$. Without loss of generality we can suppose $q^d\ge \kappa$. 
We have 
$$
F(X,Y)=a_0\prod_{\sigma} (X-\sigma(\alpha)Y),
$$
where $\sigma$ in the product runs through the set of embeddings of the field $K:=\Q(\alpha)$ in $\C$. 
The element $\alpha$ is in $\C$ and we write $\Id $ for the inclusion of $K$ into $\C$. Hence 
$$
|F(p,q)|= a_0q^d \left|\alpha-\frac{p}{q}\right|\prod_{\sigma\neq \Id} \left|\sigma(\alpha)-\frac{p}{q}\right|.
$$
For $\sigma\neq \Id$, we use the upper bound 
$$
 \left|\sigma(\alpha)-\frac{p}{q}\right|\le |\alpha-\sigma(\alpha)|+ \left| \alpha-\frac{p}{q}\right| 
 \le |\alpha-\sigma(\alpha)| +1,
 $$
which comes from $(\ref{Equation:Thue(ii)})$ and from $q^d\ge k$. Therefore 
$$
0<|F(p,q)|\le a_0 \kappa \prod_{\sigma\neq \Id} \bigl( |\alpha-\sigma(\alpha)| +1 \bigr).
$$
The assertion $(i)$ allows us to conclude that the set of elements $p/q$ is finite, from which we deduce the assertion $(ii)$.
 \bigskip 
\par 
$(2)$ Conversely, suppose that the assertion $(ii)$ is true. Let $k$ be a non--zero integer and let $(x,y)\in\Z^2$ satisfy $F(x,y)=k$. We want to show, by assuming $(ii)$, that these couples $(x,y)$ belong to a finite set. Without loss of generality, we may suppose $|y|$ sufficiently large. Let $\alpha$ be a root of $f$ at a minimal distance from $x/y$. We remark that
$$ 
|k|\;=\;|F(x,y)|\; = \;a_0 |y|^d
 \left|\alpha-\frac{x}{y}\right|\prod_{\sigma\neq \Id} \left|\sigma(\alpha)-\frac{x}{y}\right|
\;\geq \;
 a_0 |y|^d \left|\alpha-\frac{x}{y}\right|^d,
$$
whereupon 
$$
 \left|\alpha-\frac{x}{y}\right|^d \leq \frac{|k|}{a_0|y|^d} \cdotp
$$
Therefore, for $|y|$ sufficiently large, for instance with
$$ 
|y|^d \geq \frac{ 2^d |k|}{a_0\, \displaystyle \min_{\sigma\neq \Id} ( |\alpha-\sigma(\alpha)|^d)},
$$
we come up with the inequality
$$ 
\left|\alpha-\frac{x}{y}\right|\; \leq \; \frac12 \min_{\sigma\neq \Id} ( |\alpha-\sigma(\alpha)|,
$$
which allows us to deduce that for any $\sigma\neq \Id$, we have 
 $$
 \left|\sigma(\alpha)-\frac{x}{y}\right|\;\geq \; \frac{1}{2} |\alpha-\sigma(\alpha)|.
 $$
Since $f$ is irreducible, 
$$
f'(\alpha)=a_0 \prod_{\sigma\neq \Id} \bigl( \alpha-\sigma(\alpha) \bigr)\neq 0.
$$
 Hence we deduce 
$$
|k|\;=\; |F(x,y)| \;=\; a_0 |y|^d
 \left|\alpha-\frac{x}{y}\right|\prod_{\sigma\neq \Id} \left|\sigma(\alpha)-\frac{x}{y}\right|
\; \geq \;2^{-d+1} |y|^d |f'(\alpha)|\cdot 
 \left|\alpha-\frac{x}{y}\right| ,
 $$
from which we come up with
$$
\left|\alpha-\frac{x}{y}\right| \; \leq \;\frac{\kappa}{ |y|^d } 
\quad \hbox{with}\quad
\kappa=\frac{2^{d-1} |k| }{|f'(\alpha)|}\cdotp
$$
 From the the assertion $(ii)$, we can say that the set of rational numbers $x/y$ verifying this inequality is finite. This allows us to conclude that the assertion $(i)$ is true. 
 \end{proof}

\section{Diophantine equations and unit equations}\label{S:texteJapon}
In section $\ref{S:ApproximationEquations}$, we considered the basic situation of rational numbers and points with rational integer coordinates on Thue curves. Here we consider the algebraic numbers while the number field $K$ may vary. We denote by $\ZK$ the ring of algebraic integers of $K$ and by $\ZK^\times$ the unit group of $K$.
Let us quote some results whose proofs appear in \cite{W:NotesHirata}.

\begin{proposition}\label{Proposition:detmKyoto}
The following statements are equivalent:
\hfill\break
\null\quad
$\bullet$
\hbox{\rm (M)} For any number field $K$ and for any non--zero element $k$ in $K$, the Mordell equation 
$$
Y^2=X^3+k
$$ 
has but a finite number of solutions $(x,y)\in \ZK\times\ZK$.
\hfill\break
\null\quad
$\bullet$
\hbox{\rm(E)} For any number field $K$ and for any polynomial $f$ in $K[X]$ of degree $3$ with three distinct complex roots, the elliptic equation 
$$
Y^2=f(X)
$$ 
has but a finite number of solutions $(x,y)\in \ZK\times\ZK$.
\hfill\break
\null\quad
$\bullet$
\hbox{\rm(HE)} For any number field $K$ and for any polynomial $f$ in $K[X]$ with at least three simple complex roots, the hyperelliptic equation 
$$
Y^2=f(X)
$$ 
has but a finite number of solutions $(x,y)\in \ZK\times\ZK$.
\hfill\break
\null\quad
$\bullet$
\hbox{\rm(SE)} For any number field $K$, for any integer $m\ge 3$ and for any polynomial $f$ in $K[X]$ with at least two distinct complex roots whose orders of multiplicity are prime to $m$, the superelliptic equation 
$$
Y^m=f(X)
$$ 
has but a finite number of solutions $(x,y)\in \ZK\times\ZK$.
\hfill\break
\null\quad
$\bullet$
\hbox{\rm(T)} For any number field $K$, for any non--zero element $k$ in $K$ and for any elements $\alpha_1,\ldots,\alpha_n$ in $K$ with $\Card\{\alpha_1,\ldots,\alpha_n\}\ge 3$, the Thue equation 
$$
(X-\alpha_1Y)\cdots (X-\alpha_nY) =k
$$
has but a finite number of solutions $(x,y)\in \ZK\times\ZK$.
\hfill\break
\null\quad
$\bullet$
\hbox{\rm(S)} For any number field $K$ and for any elements $a_1$ and $a_2$ in $K$ with $a_1a_2\neq 0$, the Siegel equation 
$$
a_1E_1+a_2E_2=1
$$ 
has but a finite number of solutions $(\varepsilon_1,\varepsilon_2)\in \ZK^\times\times\ZK^\times$.

\end{proposition}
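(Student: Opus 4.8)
The plan is to prove the six assertions equivalent by exhibiting a cycle of implications, with the Siegel unit equation (S) as a hub. Three arrows are immediate specialisations: a simple root has multiplicity $1$, which is prime to $2$, so (SE) with $m=2$ applied to a polynomial with at least three simple roots gives (HE); a cubic with three distinct roots has three simple roots, so (HE)$\Rightarrow$(E); and the discriminant of $X^3+k$ is $-27k^2\neq0$, so for $k\neq0$ the polynomial $X^3+k$ has three distinct roots and (E)$\Rightarrow$(M). It remains to close the chain (SE)$\Rightarrow$(HE)$\Rightarrow$(E)$\Rightarrow$(M) into a cycle and to insert (T) and (S).

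For (S)$\Rightarrow$(T) I would use Siegel's three-term identity. Given a solution $(x,y)\in\ZK^2$ of $(X-\alpha_1Y)\cdots(X-\alpha_nY)=k$, set $\beta_i=x-\alpha_iy$ and choose indices with $\alpha_i,\alpha_j,\alpha_\ell$ pairwise distinct; then
\[
(\alpha_j-\alpha_\ell)\beta_i+(\alpha_\ell-\alpha_i)\beta_j+(\alpha_i-\alpha_j)\beta_\ell=0 .
\]
Because $\prod_t\beta_t=k$ is fixed, $|\rmN(\beta_i)|\le|\rmN(k)|$, so each ideal $(\beta_i)$ lies among the finitely many ideals of bounded norm; fixing a generator gives $\beta_i=\mu_iu_i$ with $\mu_i$ in a finite set and $u_i\in\ZK^\times$. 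Dividing the identity by $(\alpha_i-\alpha_j)\mu_\ell u_\ell$ produces a genuine unit equation in $u_i/u_\ell$ and $u_j/u_\ell$, to which (S) applies; the finitely many resulting values of $\beta_i/\beta_\ell$ bound $x/y$, and $\prod_t\beta_t=k$ then pins down finitely many $(x,y)$.

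To come back from the curves to the hub I would first prove the slick implication (M)$\Rightarrow$(S), which together with the specialisations shows that each of (M), (E), (HE), (SE) implies (S). Starting from $a_1\varepsilon_1+a_2\varepsilon_2=1$ with $\varepsilon_i\in\ZK^\times$, put $Y=a_1\varepsilon_1-a_2\varepsilon_2$, so that
\[
Y^2=1-4a_1a_2\varepsilon_1\varepsilon_2 .
\]
Pick $c\in\ZK\setminus\{0\}$ with $ca_1,ca_2\in\ZK$, and work in the \emph{fixed} finite extension $K'$ of $K$ obtained by adjoining the cube roots of a fundamental system of units of $K$ and of $-4a_1a_2c^6$; there a cube root $X'$ of $-4a_1a_2c^6\varepsilon_1\varepsilon_2$ exists, is integral, and satisfies $(Y')^2=(X')^3+c^6$ with $Y'=c^3Y\in\ZK$. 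Since $Y'$ recovers $Y$, which together with $a_1\varepsilon_1+a_2\varepsilon_2=1$ determines $\varepsilon_1$ and $\varepsilon_2$, the assignment $(\varepsilon_1,\varepsilon_2)\mapsto(X',Y')$ is injective; hence finiteness of the Mordell solutions over $K'$ forces finiteness of the unit solutions. That the auxiliary field $K'$ does not depend on the solution is exactly where the quantification over all number fields is used. The classical reduction of Mordell's equation to a finite set of Thue equations then gives (T)$\Rightarrow$(M), so that (T) enters the cycle through (S)$\Rightarrow$(T)$\Rightarrow$(M)$\Rightarrow$(S).

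The single genuinely hard arc is the forward direction (S)$\Rightarrow$(SE) --- whence (S)$\Rightarrow$(HE), (E), (M) by specialisation --- namely deducing finiteness of the integral points on a superelliptic curve from the unit equation. This is Siegel's method: over the splitting field of $f$ a valuation-and-parity argument, using that the distinguished roots have multiplicity prime to $m$, shows that each factor $x-\gamma_i$ equals $\delta_i\zeta_i^{m}$ with $\delta_i$ in a finite set, and three such factors fed into Siegel's identity should manufacture a unit equation. I expect the main obstacle to be precisely this step, for two reasons: the linear relation among the $\zeta_i$ a priori defines only a curve, so one must exploit the coprimality to $m$ to force a two-term unit equation rather than a conic; and, more delicately, the factorisations naturally produce $S$-units and Thue--Mahler data, whereas (S) concerns only the honest units of $\ZK$. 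Reconciling the two, by absorbing the finitely many bad primes into the field of definition and the coefficients while keeping norms bounded so as to remain within $\ZK^\times$, is the heart of the argument and is what distinguishes this proposition from its Thue--Mahler strengthening.
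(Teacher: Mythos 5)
Your opening move is the decisive flaw: you deduce (HE) from (SE) ``with $m=2$'', but (SE) is quantified only over integers $m\ge 3$, so it asserts nothing whatsoever about equations $Y^2=f(X)$. The restriction is not cosmetic: the two-root hypothesis of (SE) is tailored to $m\ge 3$, and the analogous statement for $m=2$ with only two simple roots is simply false --- for $f(X)=2X^2+1$, which has two distinct simple complex roots, the equation $Y^2=f(X)$ is a Pell equation with infinitely many solutions in rational integers. That is precisely why (HE) demands \emph{three} simple roots, and why the scheme the paper quotes keeps (S)$\Rightarrow$(HE) as one of the three genuinely difficult implications, proved by Siegel's method for $m=2$, where the third root is indispensable. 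Once your arrow (SE)$\Rightarrow$(HE) is deleted, your diagram is no longer strongly connected: in your scheme no statement implies (HE), and (SE) implies none of the others, so the equivalence of the six statements is not established.

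The correct way to hook (SE) into the cycle --- and the one used in the scheme the paper reproduces --- is (SE)$\Rightarrow$(M): read Mordell's equation as $X^3=Y^2-k$, a superelliptic equation with exponent $m=3$ whose polynomial $Y^2-k$ has two distinct simple roots (since $k\neq 0$), multiplicity $1$ being prime to $3$. The paper's diagram is then the cycle (T)$\Rightarrow$(SE)$\Rightarrow$(M)$\Rightarrow$(S)$\Rightarrow$(T) together with (S)$\Rightarrow$(HE)$\Rightarrow$(E)$\Rightarrow$(M), with (T)$\Rightarrow$(SE), (S)$\Rightarrow$(T) and (S)$\Rightarrow$(HE) singled out as the hard steps. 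For what it is worth, two pieces of your proposal are sound and close in spirit to that scheme: the (M)$\Rightarrow$(S) reduction via $Y=a_1\varepsilon_1-a_2\varepsilon_2$, $Y^2=1-4a_1a_2\varepsilon_1\varepsilon_2$, passing to a fixed extension containing cube roots of a system of fundamental units (a good illustration of why the quantification over all number fields matters), and the (S)$\Rightarrow$(T) argument via Siegel's identity and ideals of bounded norm. But besides the $m=2$ gap, two of your arrows are only asserted or sketched --- (T)$\Rightarrow$(M) (``classical reduction'') and (S)$\Rightarrow$(SE), which you yourself flag as unresolved --- so even after repairing the superelliptic--hyperelliptic link the proposal would remain incomplete; the minimal repair is to add (SE)$\Rightarrow$(M) as above and to prove (S)$\Rightarrow$(HE) directly.
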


Each of these statements is a theorem: the first four ones are due to Siegel who proved that the sets of integral points
respectively on a Mordell curve (M), on an elliptic curve (E), on a hyperelliptic curve (HE), on a superelliptic curve (SE), are finite. Statement (T) is due to Thue and (S) deals with the unit equation introduced by Siegel. 

For each of the six equivalent statements in Proposition $\ref{Proposition:detmKyoto}$, an upper bound is known for the size of the solutions; the proofs of the equivalences between them are elementary and effective: they allow one to deduce, from an explicit version of any of these statements, an explicit version of the other ones.

The proof of the equivalence given in \cite{W:NotesHirata} is elementary; it goes as follows:
$$
\begin{matrix}
\hbox{\rm(SE)} & \Longrightarrow & \hbox{\rm(M)} & \Longleftarrow & \hbox{\rm(E)}
\\
 \Uparrow&& \Downarrow&& \Uparrow
 \\
\hbox{\rm(T)} & \Longleftarrow & \hbox{\rm(S)} & \Longrightarrow & \hbox{\rm(HE)}
\end{matrix}
$$
The three implications which are not so easy to prove are 
$$
\hbox{\rm(T)} \Longrightarrow \hbox{\rm(SE)},
\quad
\hbox{\rm(S)} \Longrightarrow \hbox{\rm(T)}
\quad
\hbox{and}\quad 
\hbox{\rm(S)} \Longrightarrow \hbox{\rm(HE)}.
$$
Further statements are equivalent to each of the statements of Proposition $\ref{Proposition:detmKyoto}$; one of them is Siegel's Theorem on the finiteness of integral points on a curve of genus $1$ (of which $(E)$ is only a special case) -- see
  \cite{MR0249355}, Chap.~28, Th.~2; 
\cite{BakerTNT},  Chap.~4; 
 \cite{LangECDA}, Chap.~VI (see in particular the appendix);  
\cite{57:12383}, Chap.~3; 
 \cite{MR88h:11002}, Chap.~5 and 6; 
 \cite{SerreMordellWeil}, Chap.~7 and 8; %
 \cite{UZ2}, Chap.~2. 

\section{Projective spaces, places, $S$--integers} 
 \label{S:Background}
 
We recall here some basic facts on projective spaces, on places of a number field, on $S$--integers and $S$--units, and finally on the notion of $S$--integral points. 
 
\subsection{Projective spaces}

Let $E$ be a $K$--vector space of finite dimension. The {\it projective space $\P(E)$ of $E$} is the set of equivalence classes of elements in $E\setminus\{{\bf 0}\}$ 
for the following equivalence relation: for $ {\bf v}$ and ${\bf v}'$ in $E$, 
$$
\hbox{${\bf v}\equiv {\bf v}'$ if and only if 
there exists $t\in K^\times$ with ${\bf v}'=t{\bf v}$. }
$$
In other terms, $\P(E)$ is the set of lines (one--dimensional vector subspaces) of $E$. 
A {\it linear projective subspace} of $\P(E)$ is a subset of the form $\P(E')$ where $E'$ is a vector subspace of $E$. If $E'$ is a $2$--dimensional subspace (resp. a hyperplane) of $E$, then $\P(E')$ is called {\it a projective line} (resp. {\it a projective hyperplane}) of $\P(E)$.

If the $K$--vector space $E$ has dimension $n+1$, the {\it dimension} of the projective space $\P(E)$ is $n$ by definition. A {\it projective line} is a projective space of dimension $1$, a {\it projective plane} is a projective space of dimension $2$. Further, if $\{e_0,\ldots,e_n\}$ is a basis of $E$, the class $P$ of $x_0e_0+\cdots+x_ne_n$ in $\P(E)$ is denoted by $(x_0:x_1:\cdots:x_n)$, and we say that the {\it projective coordinates of} $P$ are $(x_0:x_1:\cdots:x_n)$. The choice of a basis of $E$ determines a system of  projective coordinates $(X_0:\cdots:X_n)$ on $\P(E)$. 

When $E$ is the vector space $K^{n+1}$, we write $\P^n(K)$ instead of $\P(K^{n+1})$. 
Therefore, using the canonical basis of $K^{n+1}$, we identify $\P^n(K)$ with the set of classes
of $(n+1)$-tuples $ (x_0, x_1, \dots , x_n)$ of $K^{n+1} \setminus \{ {\bf 0}\}$ modulo the equivalence relation:
$ (x_0, x_1, \dots , x_n) \equiv (x'_0, x'_1, \dots , x'_n)$ if and only if there exists $t\in K^\times $ such that $x'_i=tx_i$ for $i=0,\dots, n$. The class of $ (x_0, x_1, \dots , x_n) $ in $\P^n(K)$ will then be denoted by $(x_0: x_1:\cdots : x_n)$. The choice of a basis of $E$ determines a system of projective coordinates $(X_0:\cdots:X_n)$ on $\P^n$; a change of basis of $E$, given by a matrix in $\GL_{n+1}(K)$, produces another system of projective coordinates $(Y_0:\cdots:Y_n)$ on $\P^n$.

 \subsection{Places, $S$--integers, $S$--units}
We first recall some basic facts on places of number fields. There is a bijection between the set of ultrametric places of $K$ and the set of prime ideals of the ring $\calO=\ZK$ of integers of $K$, where the place $v$ corresponds to the prime ideal $\pgoth$ of $\calO$ so that 
$$
\pgoth
=\; \left\{ x\in \calO \, \mid \, |x|_v < 1 \right\}.
$$
The localization of ${\calO} $ at ${\pgoth}$, 
$$
{\calO} _{\pgoth} \;=\; \left\{\frac{a}{b} \, \mid \, a\in {\calO} , b\in \calO \setminus {\pgoth}\right\} \; =\; \left\{ x\in K \, \mid \, |x|_v \leq 1 \right\},
$$
is a local ring, with maximal ideal 
$$
\mgothpgoth\;=\; {\pgoth}{\calO} _{\pgoth}\;=\; \left\{\frac{a}{b} \, \mid \, a\in {\pgoth} , b\in \calO \setminus {\pgoth}\right\}
\;=\; \{ x\in K\ \, \mid \, |x|_v < 1 \}.
$$
The {\it residue field} of ${\calO} _{\pgoth}$ is $\kappa_{\pgoth}:={\calO} _{\pgoth}/ \mgothpgoth$.
We denote by $\pi_{\pgoth}$ the canonical surjective homomorphism 
${\calO} _{\pgoth} \rightarrow \kappa_{\pgoth}$ with kernel $\mgothpgoth$.
The unit group of ${\calO} _{\pgoth}$ is 
$$
{\calO} _{\pgoth}^\times = {\calO} _{\pgoth}\setminus \mgothpgoth\; = \; \pi_{\pgoth}^{-1}(\kappa_{\pgoth}^\times)
\; =\; \left\{\frac{a}{b} \, \mid \, a , b\in \calO \setminus {\pgoth}\right\}
\;=\; \{ x\in K \, \mid \, |x|_v = 1 \}.
$$
We shall use also the notations ${\calO} _v$, $\mgoth_v$, $\kappa_v$, $\pi_v$ when $v$ is the place associated with $\pgoth$.

Let $P$ be a point in $\P^n(K)$ and $v$ an ultrametric place of $K$. We select projective coordinates $(x_0:\cdots:x_n)$ of $P$. Let $i_0\in\{0,\ldots,n\}$ satisfy $|x_{i_0}|_v=\max_{0\le i\le n} |x_i|_v$. For $i=0,\dots,n$, set $y_i=x_i/x_{i_0}$. Then 
$(y_0:\cdots:y_n)$ is a system of projective coordinates of $P$ with $y_i\in \calO_v$ and $y_0, \dots,y_n$ not all in $\mgoth_v$. Hence $(\pi_v(y_0):\cdots:\pi_v(y_n))$ is a system of projective coordinates of a point in $\P^n(\kappa_v)$ which will be called {\it the reduction, in the projective space on the residue field, of the point $P$}.

 
 We now introduce the definitions of the ring of $S$--integers and the group of $S$--units of a number field $K$, when $S$ is a finite set of places of $K$ including the archimedean places (see for instance 
\cite{MR88h:11002}, 
 Chap.~7;
\cite{SerreMordellWeil}, \S 7.1; %
\cite{UZ1}, 
 \S 3.3.2).
The ring $\OS$ of $S$-integers of $K$ is defined by
$$
\OS\;=\; \{ x\in K \, \mid \, |x|_v \leq 1 \, \mbox{ for each } \, v \not\in S \}= \bigcap_{{v}\not\in S} {\calO} _{v} .
$$ 
The group $\OS^\times$ of $S$-units of $K$ is the group of units of $\OS$, namely
$$
\OS^\times \;=\; \{ x\in K \, \mid \, |x|_v = 1 \, \mbox{ for each } \, v \not\in S \}= \bigcap_{{v}\not\in S} {\calO} _{v}^\times .
$$ 
Thanks to the last formulas, when we will deal with $S$-integers $\alpha$ (resp. $S$-units $\varepsilon$), 
we will use the fact that $\alpha$ (resp. $\varepsilon$) belongs 
to the local rings $ \calO_v$ (resp. to the unit groups of the local rings $ \calO_v$) at all places $v$ outside $S$.

Consider the special case $K=\Q$. The set $S$ is then the union of the infinite place of $\Q$ and finitely many ultrametric places. These ultrametric places are associated with prime numbers $p_1,\ldots,p_s$. The ring of $S$--integers consists of rational numbers of the form $a/b$ where the denominator $b$ has all its prime factors in the set $\{p_1,\ldots,p_s\}$, while the group of $S$--units consists of all rational numbers of the form $\pm p_1^{a_1}\cdots p_s^{a_s}$ with $a_1,\dots,a_s$ in $\Z$.


\subsection{$S$--integral points}

 There is a general notion of {\it set of integral points on a projective variety relative to a very ample effective divisor } 
(see for instance 
\cite{MR883451}, 
Chap.~1, \S4). We will deal with the very special case of this situation where the variety is a projective space $\P^n(K)$ and the divisor is a union of finitely many hyperplanes. For this special case, see also \cite{UZ1}, 
  Remark 3.14.

Let $S$ be a finite set of places of $K$ including the archimedean places. Let us take $(X:Y)$ for a system of projective coordinates  on $\P^1(K)$. A point of $\P^1(K)$ which is not $(1:0)$ has projective coordinates $(\alpha:1)$ for some $\alpha\in K$. By definition, this point is called 
{\it an $S$--integral point of $\P^1(K)\setminus\{(1:0)\}$} if and only if $\alpha$ is an $S$--integer. It is clear that if $\alpha$ is an $S$--integer, then, for each place $v$ not in $S$, it reduces, in the projective line on the residue field, to a point which is not $(1:0)$. The converse is true. Indeed, if $\alpha$ is not an $S$--integer,
 then there is a place $v$ of $K$ not in $S$ such that $|\alpha|_v>1$. For this place $v$ the reduction of $(\alpha:1)=(1:\alpha^{-1})$, in the projective line on the residue field, is $(1:0)$.
 
 Suppose now  that the projective coordinates of an $S$–integral point of $\P^1(K)\setminus \{(1 : 0)\}$ are $(u:1)$.
Then this point is also an $S$--integral point of $\P^1(K)\setminus \{(0: 1)\}$  if and only if, for each place $v$ not in $S$, it reduces, in the projective line on the residue field, to a point which is not in $(0 : 1)$, hence if and only if $u$ is an $S$–unit. If these conditions are satisfied, then the same point $(u:1)$ is also an $S$–integral point on
$\P^1(K)\setminus \{(1 : 1)\}$   if  and only if $u-1$ is an $S$–unit of $K$. 

In the same way, a point of $\P^n(K)$ which is not in the hyperplane $H_0$ of equation $X_0=0$ has coordinates $(1:\alpha_1:\cdots:\alpha_n)$. By definition, it is an {\it $S$--integral point of $\P^n(K)\setminus H_0$} if and 
only if $\alpha_1,\ldots,\alpha_n$ are in $\OS$. This is equivalent to the fact that, for each place $v$ not in $S$, it reduces, in the projective space $\P^n(K)$ on the residue field, to a point which is not in  $H_0$. Further, for $1\le i\le n$, denote by $H_i$ the hyperplane of equation $X_i=0$. Then the point $(1:\alpha_1:\cdots:\alpha_n)$ is an {\it $S$--integral point of $\P^n(K)\setminus (H_0\cup\cdots \cup H_n)$} if and 
only if $\alpha_1,\ldots,\alpha_n$ are in $\OS^\times$. Furthermore, if these conditions are satisfied, then the same point is an $S$--integral point on the complement of the hyperplane of equation $X_0+\cdots+X_n=0$ if and only if $1+\alpha_1+\cdots+\alpha_n$ is an $S$--unit.

\noindent
{\sc Examples.}
 Here are a few examples,  where we take some  systems of  projective coordinates  $(X_0:\cdots:X_n)$ on $\P^n(K)$, $(X:Y)$ on $\P^1(K)$ and $(T:X:Y)$ on $\P^2(K)$. 
\par
$\bullet$ The complement of a hyperplane in the projective space $\P^n(K) $ is an affine space, isomorphic to $\A^n(K)$. For instance 
$$
 \P^n(K) \setminus \{X_0=0\}= \{(1:x_1:\cdots : x_n) \,\mid \, (x_1,\dots ,x_n) \in K^n\}\; \simeq\; K^n ,
 $$
 and the set of $S$--integral points on $ \P^n(K) \setminus \{X_0=0\}$ can be identified with $\OS^n$. 
 \par
$\bullet$ The special case $n=1$ of the previous example consists in removing one point on the projective line $\P^1(K)$: one gets the affine line $\A^1(K)$, which is also the additive group $\G_a$, so
$$
 \P^1(K) \setminus \{(0:1)\} \simeq \G_a(K)=K; 
 $$
 if we remove two points from $\P^1(K)$,
 we obtain the multiplicative group $\G_m$, so
$$ 
\P^1(K) \setminus (\{(0:1)\; , (1:0) \}) = \{(x:1) \,\mid \, x \in K^\times \} \simeq \G_m(K)=K^\times,
$$
which is isomorphic to the affine variety $V:= \{(x,y)\in K^2 \,\mid \, xy=1 \}$, an isomorphism being given by  $(x:1)\longmapsto (x,x^{-1})$. In view of this isomorphism, given the fact that the set of $S$--integral points on $V$ is $V\cap \OS^2$, it follows that the set of $S$--integral points on $\G_m$
is $\OS^\times$.
\par
$\bullet$ 
If one removes from $ \P^1(K) $ three points, say $(0:1)$, $(1:0)$, $(1:-1)$, the set of $S$--integral points is the set of pairs $(\varepsilon_1,\varepsilon_2)$ of $S$--units such that $\varepsilon_1+\varepsilon_2$ is a unit. 
\par
$\bullet$ 
The complement of two distinct hyperplanes (lines) in the projective plane $\P^2(K) $ is isomorphic to the product of the multiplicative group by the additive group, 
$$ 
\P^2(K) \setminus (\{T=0\}\cup \{X=0\}) = \{(1:x:y) \,\mid \, (x,y) \in K^\times \times K \}\; \simeq\; K^\times \times K,
$$
and the set of $S$--integral points 
 can be identified with $\OS^\times\times \OS$. 
\par
$\bullet$ 
The complement in $\P^2(K)$ of three hyperplanes in general position,
$$
\P^2(K) \setminus (\{T=0\}\cup \{X=0\}\cup \{Y=0\}) = \{(1:x:y) \,\mid \, (x,y) \in K^\times \times K^\times \}\; \simeq\; K^\times\times K^\times,
$$
is isomorphic to the product of two copies of the multiplicative group, the integral points of which are $\OS^\times\times \OS^\times$. 
\par
$\bullet$ 
Consider the complement in $\P^2(K)$ of four hyperplanes in general position: 
$$
 {\cal T}:= \P^2(K) 
\setminus (\{T=0\}\cup 
\{X=0\}\cup \{Y=0\}\cup \{X+Y=0\}).
$$
Then ${\cal T}$ is an affine variety,
$$ {\cal T}
=\{(1:x:y)\, \mid \, (x,y) \in K^\times \times K^\times, x+y\neq 0 \}\; \simeq\; \{ (x,y) \in K^\times\times K^\times \,\mid \, x+y\neq 0\} ,
$$ 
isomorphic to
$$
V\;=\; \{(x,a,y,b,c)\in K^5 \, \mid \, ax=by=c(x+y)=1\},
$$
the bijection being given by $(x,y) \mapsto (x, x^{-1}, y, y^{-1}, (x+y)^{-1})$. Therefore the set of $S$-integral points on $V$ is $V\cap \OS^5$, whereupon the set of $S$--integral points on ${\cal T}$ is
$$ 
\{(x:y:1)\, \mid \, x, y, x+y \in \OS^\times \}.
$$
This completes our list of examples.

Dealing with the standard hyperplanes associated with a given system of projective coordinates, as we have done so far, allowed us to give an elementary introduction to the subject. We shall need to deal with the more general case of hyperplanes in $\P^n(K)$. We proceed in two stages. 

For the first one, we assume that the ring $\OS$ is principal, which enables us to work globally. Consider a hyperplane $H$ in $\P^n$. It has an equation
$$
a_0X_0+a_1X_1+\cdots+a_nX_n=0 \quad \mbox{with} \; a_i \in \OS\; (i=0,1, \dots , n),
$$
 which is unique up to multiplication by an element of $\OS^\times$, such that $\gcd(a_0,a_1,\dots,a_n)=1$. Further, any projective point $P$ in $\P^n(K)$ has projective coordinates 
$$
(x_0:x_1:\cdots:x_n) \quad  \mbox{with} \; x_i \in \OS\; (i=0,1, \dots , n) \;\mbox{ and } \;   \gcd(x_0,x_1,\dots,x_n)=1,
$$
and again such projective coordinates are unique up to multiplication by an element in $\OS^\times$. Then, by definition, $P$ is an $S$--integral point on $\P^n(K)\setminus H$ if and only if $a_0x_0+a_1x_1+\cdots+a_nx_n$ is an $S$--unit. 
 
 In the second and final stage of our definition, we remove the assumption that $\OS$ is principal. In this general case we work locally. Let again $H$ be a hyperplane of $\P^n(K)$ and $P$ a point of $\P^n(K)$ not in $H$. Let $v$ be an ultrametric place of $K$ not in $S$. Then $H$ has an equation
$$
a_0X_0+a_1X_1+\cdots+a_nX_n=0,
$$
with $a_i\in \OS$, $\max\{ |a_0|_v,|a_1|_v,\dots,|a_n|_v\}=1$ and $P$ has projective coordinates  
$$
(x_0:x_1:\cdots:x_n) \quad  \mbox{with} \; x_i \in \OS\; (i=0,1, \dots , n) \;\mbox{ and } \;   \max\{| x_0|_v,|x_1|_v\dots,|x_n|_v\}=1.
$$
This equation and these coordinates may depend on $v$. 
Then, by definition, $P$ is an $S$--integral point on $\P^n(K)\setminus H$ if and only if $|a_0x_0+a_1x_1+\cdots+a_nx_n|_v=1$ for all $v$ not in $S$. 

If one allows a finite extension of $S$ (as we will always do), one may work globally and use a single equation independent of $v$ as follows. Given a hyperplane $H$ of equation $a_0X_0+a_1X_1+\cdots+a_nX_n=0$ with $(a_0,\dots,a_n)\in K^{n+1}\setminus\{{\bf 0}\}$, one replaces $S$ by the union $S'$ of $S$ with the the finitely many places $v$ of $K$ such that $\max\{ |a_0|_v,|a_1|_v,\dots,|a_n|_v\}\not =1$. Then one uses this equation for $H$ for all $v\not\in S'$. 

Our definition depends on a choice of a system of  projective coordinates. If $(X_0:X_1:\cdots:X_n)$ and $(Y_0:Y_1:\cdots:Y_n)$ are two distinct systems of projective coordinates, then $S$--integral points in the first system may not be $S$--integral points in the second system. However, there is a matrix in $\GL_{n+1}(K)$ which links the two systems of projective coordinates, and if one defines $S'$ as the union of $S$ with the finitely many ultrametric places $v$ of $K$ such that the determinant $\Delta$ of this matrix satisfies $|\Delta|_v\not=1$, then a set of $S'$-integral points relative to one system of coordinates remains a set of $S'$--integral points  relative to the other. 

Since all our results will allow a finite extension of $S$, we shall work with this notion of $S$--integral points depending on a choice of coordinates. There is an alternative definition, which gives equivalent results in our situation, and has the advantage of yielding  the more general notion of $S$--integral points on affine varieties, where one allows bounded denominators (see {\it e.g.} 
\cite{MR2438848}, 
p.~259--260); this is what Serre calls {\it quasi--integral sets on an affine variety} in \cite{SerreMordellWeil}, \S7.1 and \S8.

\section{Thue, Mahler, Siegel, Vojta} 
 \label{S:Equivalence}
 
The aim of this section is to establish an equivalence between many assertions. The first two concern Thue--Mahler equations; we prove the very interesting fact that it suffices to solve the equation for the very special case of the cubic form $XY(X-Y)$ in order to deduce the general case. The next assertion is a theorem of Siegel on the finiteness of the number of solutions of an equation of the form $E_1+E_2=1$ in $S$--units $\varepsilon_1,\varepsilon_2$ of a number field. The fourth (resp{.} fifth) assertion is the particular case $n=1$ (resp{.} $n=2$) of the theorem 
stating that any set of $S$--integral points of $\P^n(K)$ minus $n+2$ hyperplanes is contained in an algebraic hypersurface, which is a special case of a more general result due to Vojta.

\bigskip

We will consider an algebraic number field $K$ and a finite set $S$ of places of $K$ containing all the archimedean places. Moreover $F$ will denote a binary homogeneous form with coefficients in $K$. 
We will consider the Thue--Mahler equations 
$F(X,Y)=E$ where the two unknowns $X, Y$ take respectively values $x, y$ in a given set of $S$–integers of $K$
while the unknown $E$ takes its values $\varepsilon$ in the set of $S$–units of $K$.
If $(x,y,\varepsilon)$ is a solution and if $m$ denotes the degree of 
$F$, then, for all $\eta\in\OS^\times$, the triple $(\eta x,\eta y, \eta^m \varepsilon)$ is also a solution. 

 \medskip

\noindent
{\bf Definition.} 
Two solutions $(x,y,\varepsilon)$ and $(x',y',\varepsilon')$ in $\OS^2\times\OS^\times $ of the equation $F(X,Y)=E $ are said to be {\it equivalent modulo} $\OS^\times$ if the points of $\P^1(K)$ with projective coordinates $(x:y)$ and $(x':y')$ are the same.

If the two solutions $(x,y,\varepsilon)$ and $(x',y',\varepsilon')$ are equivalent, there exists $\eta\in K^\times$ such that $x'=\eta x$ and $y'=\eta y$. Since $(x,y,\varepsilon)$ and $(x',y',\varepsilon')$ are solutions of the equation $F(X,Y)=E$, we also have $\varepsilon'=\eta^m \varepsilon$ where $m$ is the degree of the binary homogeneous form
$F(X,Y)$. Since $\varepsilon$ and $\varepsilon'$ are $S$--units, $\eta^m $ is also an $S$--unit, hence 
 $\eta\in\OS^\times$. In other terms, two solutions $(x,y,\varepsilon)$ and $(x',y',\varepsilon')$ are equivalent if there exists $\eta\in \OS^\times$ such that 
$$
x'=\eta x,\quad y'=\eta y, \quad \varepsilon'=\eta^m \varepsilon.
$$ 

 \medskip

\noindent
{\bf Definition.} 
We will say that such a Thue--Mahler equation has {\it but a finite number of classes of solutions} if the set of solutions $(x,y,\varepsilon)\in\OS^2\times\OS^\times $ can be written as the union of a finite number of equivalence classes modulo $\OS^\times$. 

This last definition is equivalent to saying that the set of points $(x:y)$ of $\P^1(K)$, for which there exists 
 $\varepsilon \in \OS^\times $ such that $(x,y,\varepsilon)$ is a solution, is finite. 
 
\begin{proposition}\label{Proposition:equivalence} 
Let $K$ be an algebraic number field.
 \hfill\break 
$(1)$
The following four assertions are equivalent: 
 \hfill\break\null\quad
$(i)$ For any finite set $S$ of places of $K$ containing all the archimedean places, for every $k\in K^\times$ and for any
binary homogeneous form 
$F(X,Y)$
with the property that the polynomial $F(X,1) \in K[X]$ has at least three linear factors involving three distinct roots in $K$, 
the Thue-Mahler equation 
$$
F(X,Y)=kE
$$ 
has but a finite number of classes of solutions $(x,y,\varepsilon)\in\OS^2\times\OS^\times $.
 \hfill\break
\null\quad
$(ii)$ For any finite set $S$ of places of $K$ containing all the archimedean places, the Thue-Mahler equation 
$$
XY(X-Y)=E
$$ 
has but a finite number of classes of solutions $(x,y,\varepsilon)\in\OS^2\times\OS^\times $.
 \hfill\break
\null\quad
$(iii)$ For any finite set $S$ of places of $K$ containing all the archimedean places, the $S$--unit equation 
$$
E_1+E_2=1
$$ 
has but a finite number 
of solutions $(\varepsilon_1,\varepsilon_2)$ in $\OS^\times \times\OS^\times $. 
 \hfill\break
\null\quad
$(iv)$ For any finite set $S$ of places of $K$ containing all the archimedean places, every set of $S$--integral points of $\P^1(K)$ minus three
 points is finite. 
 \hfill\break 
 \hfill\break 
 $(2)$ 
Moreover, each of these assertions is a consequence of the following one: 
 \hfill\break
\null\quad
$(v)$ For any finite set $S$ of places of $K$ containing all the archimedean places, every set $A$ of $S$--integral points on an open variety 
$\bV$, obtained by removing from $\P^2(K)$ four hyperplanes, 
is contained in a finite union of projective hyperplanes of $\P^2(K)$. 
 \end{proposition}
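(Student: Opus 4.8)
The plan is to use assertion $(iii)$, the $S$--unit equation, as the hub of the equivalence in part $(1)$ and to deduce everything else from it. The equivalences $(ii)\Leftrightarrow(iii)$ and $(iii)\Leftrightarrow(iv)$ are elementary bijections already recorded among the examples above; the implication $(i)\Rightarrow(ii)$ is essentially a specialization, once one allows a change of projective coordinates; and the substantial direction is $(iii)\Rightarrow(i)$, which reduces an arbitrary admissible Thue--Mahler equation to the unit equation. For part $(2)$ I would deduce $(iii)$, and hence by part $(1)$ all of $(i)$--$(iv)$, from $(v)$ by embedding the relevant $S$--integral points into $\P^2(K)$ along a conic.

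For the easy equivalences: if $(x,y,\varepsilon)\in\OS^2\times\OS^\times$ solves $XY(X-Y)=E$, then $xy(x-y)=\varepsilon$ is an $S$--unit, which forces each of $x$, $y$, $x-y$ to be an $S$--unit; writing $u=x/y$ one lands on $u,\,1-u\in\OS^\times$, i.e. a solution $(u,1-u)$ of $E_1+E_2=1$, and this gives a bijection on classes modulo $\OS^\times$, proving $(ii)\Leftrightarrow(iii)$. For $(iii)\Leftrightarrow(iv)$, removing the three points $(0:1)$, $(1:0)$, $(1:1)$ from $\P^1(K)$ leaves the $S$--integral points $(u:1)$ with $u\in\OS^\times$ and $u-1\in\OS^\times$, again in bijection with the solutions of $E_1+E_2=1$; an arbitrary triple of points is carried to this one by an element of $\GL_2(K)$, which preserves $S$--integrality after a finite enlargement of $S$. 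Finally, as a binary form $XY(X-Y)$ has three pairwise non--proportional linear factors, so a suitable coordinate change turns it into a form $F$ for which $F(X,1)$ has three distinct roots in $K$; since finiteness of the number of classes of solutions is invariant under $\GL_2(K)$ (after enlarging $S$), assertion $(i)$ applied to that form yields $(ii)$.

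The heart of the matter is $(iii)\Rightarrow(i)$. First I would enlarge $S$ to a finite set $S'$ so that $\OSprime$ is principal and so that the leading coefficient of $F$, the constant $k$, the differences $\alpha_i-\alpha_j$ of three distinct roots $\alpha_1,\alpha_2,\alpha_3\in K$, and the coefficients of the cofactor $G$ in a factorization $F=(X-\alpha_1Y)(X-\alpha_2Y)(X-\alpha_3Y)\,G$ all become $S'$--units or $S'$--integers; one may moreover assume the coordinates $(x,y)$ of a solution are coprime in $\OSprime$. Then, for a solution, $F(x,y)=k\varepsilon$ is an $S'$--unit equal to the product $L_1L_2L_3\,G(x,y)$ of $S'$--integers, where $L_i=x-\alpha_iy$; hence each $L_i$ is itself an $S'$--unit. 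Using the identity $(\alpha_2-\alpha_3)L_1+(\alpha_3-\alpha_1)L_2+(\alpha_1-\alpha_2)L_3=0$, I would divide through to obtain $E_1+E_2=1$ with $E_1,E_2\in\OSprime^\times$; assertion $(iii)$ applied to $S'$ then leaves only finitely many pairs $(E_1,E_2)$, each of which determines the ratio $L_1:L_3$, hence the point $(x:y)\in\P^1(K)$, hence the class of the solution. The hard part will be exactly this passage: controlling the finitely many ``bad'' primes through the enlargement of $S$, and proving that the three selected linear factors are $S'$--units. This is the step the authors single out as ``not so easy''.

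For part $(2)$ I would encode a solution of $E_1+E_2=1$ by the single parameter $u=\varepsilon_1$, which satisfies $u,\,1-u\in\OS^\times$, and map it to the point $(1:u:u^2)\in\P^2(K)$. With the four hyperplanes $\{T=0\}$, $\{X=0\}$, $\{Y=0\}$ and $\{X=T\}$, this point is an $S$--integral point of the complement precisely when $u$ and $u-1$ are $S$--units (after a finite enlargement of $S$), that is, precisely for the $u$ arising from a solution of the unit equation. By $(v)$ the set of these points is contained in a finite union of lines of $\P^2(K)$; but all of them lie on the conic $\{TY=X^2\}$, which no line can meet in more than two points. Hence only finitely many values of $u$ occur, which gives $(iii)$; invoking part $(1)$ then shows that $(v)$ implies each of $(i)$--$(iv)$.
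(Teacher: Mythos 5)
Your proposal is correct. For part $(1)$ it follows essentially the paper's own route: the unit equation $(iii)$ is the hub, $(i)\Rightarrow(ii)$ is the same coordinate-change observation (the paper uses $X'=X-Y$, $Y'=X+Y$ to reach $X'(X'-Y')(X'+Y')$, whose dehomogenization has the three roots $0,1,-1$), and your $(iii)\Rightarrow(i)$ is exactly the paper's argument: factor $F$ through three $K$-rational roots, form the Siegel identity $(\alpha_2-\alpha_3)L_1+(\alpha_3-\alpha_1)L_2+(\alpha_1-\alpha_2)L_3=0$, enlarge $S$ to a set $S'$ built only from $k$, the $\alpha_i$ and the cofactor, conclude that each $L_i$ is an $S'$-unit because a product of $S'$-integers equal to an $S'$-unit forces every factor to be one, invoke $(iii)$ over $S'$, and recover $(x:y)$ from the ratio $L_1:L_3$ by a M\"obius transformation. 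Two small points there: your normalization that $\OSprime$ be principal and $(x,y)$ coprime is never used, and it is not innocent as stated (rescaling a solution to coprime coordinates need not yield a solution of $F(X,Y)=kE$), so it should simply be dropped; and, like the paper, you should make sure $S'$ also contains the places at which the $\alpha_i$ themselves fail to be integral, so that each $L_i=x-\alpha_i y$ really lies in $\OSprime$ -- your catch-all phrase covers this if read generously. Your direct two-way bijection for $(ii)\Leftrightarrow(iii)$ (using that $xy(x-y)\in\OS^\times$ forces $x$, $y$, $x-y\in\OS^\times$) is sound; the paper instead closes the cycle $(i)\Rightarrow(ii)\Rightarrow(iii)\Rightarrow(i)$, but this is only a difference of bookkeeping.

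Where you genuinely diverge is $(v)\Rightarrow(iii)$. The paper attaches to each solution the points $(\varepsilon:\varepsilon_1:\varepsilon_2)$, with an auxiliary parameter $\varepsilon$ running over $\OS^\times$, removes the four hyperplanes $E=0$, $E_1=0$, $E_2=0$, $E_1+E_2=0$ (which are in general position), and then -- assuming without loss of generality that $\OS^\times$ is infinite -- argues that the homogeneous polynomial cutting out the finite union of lines must vanish identically in the variable $E$, which forces finiteness of the pairs $(\varepsilon_1,\varepsilon_2)$. You instead place the solutions on the conic $u\mapsto(1:u:u^2)$, remove $T=0$, $X=0$, $Y=0$, $X-T=0$, and use the B\'ezout-type fact that a line meets the irreducible conic $TY=X^2$ in at most two points, so a finite union of lines can contain only finitely many of your points. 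Your argument is shorter and dispenses with both the auxiliary parameter and the infinitude assumption on $\OS^\times$; the paper's version has the advantage that its hyperplanes are in general position and that it is the template that scales to the higher-dimensional statement of Proposition 5.1. One thing you should say explicitly: your four lines are \emph{not} in general position (three of them pass through $(0:0:1)$), so your application leans on the fact that assertion $(v)$, as stated, requires only four distinct hyperplanes -- which is indeed all it requires, so the argument is legitimate as written.
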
 
 
Before proceeding with the proof, many remarks are in order. 
These assertions are true: $(i)$ to $(iv)$ are theorems essentially going back to the work of K. Mahler 
 (\cite{MR0249355}; 
 \cite{MR88h:11002}, Chap.~7; 
 \cite{MR93a:11048}, Chap.~IX \S3; 
\cite{UZ1}, Chap.~I \S4 and \S5,   Chap.~III \S2; 
 \cite{MR2465098}, \S8.1; 
[ \cite{UZ2} Chap.~2).  
In $(iii)$ the finiteness statement for the number of solutions of the unit equation was singled out by C.L. Siegel, K. Mahler and S. Lang.  
The assertion $(iv)$ (resp{.} $(v)$) is the particular case $n=1$ (resp{.} $n=2$) of a theorem on integral points of $\P^n(K)$ minus $n+2$ hyperplanes, which in turn is a special case of a theorem due to P. Vojta concerning integral points on a variety minus a suitable divisor (see \S$\ref{S:Unit-Vojta}$). 
Moreover, the three missing points in $(iv)$ are classically denoted 
\begin{equation}\label{Equation:ZeroUnInfini}
{\bf 0}=(0:1), \;\; {\bf 1}=(1:1), \; \; {\boldsymbol \infty}=(1:0).
\end{equation}
It should now be clear that the spirit of the last proposition is to state that the truth of each of the first four assertions implies the truth of each of the three other ones,
and to state that the truth of the fifth assertion implies the truth of each of the first four assertions. 
We give elementary proofs of the equivalences of some assertions, while the proof of the truth of each of these assertions
relies on Schmidt’s Subspace Theorem.


Here again, like in \S$\ref{S:ApproximationEquations}$ and \S$\ref{S:texteJapon}$, explicit versions are known for each of the  statements $(i)$ to $(iv)$ in Proposition $\ref{Proposition:equivalence}$, and the proofs of the equivalences between these assertions enable one to deduce, from an explicit version of one of them, an explicit version for each of the three other statements.

 The remarkably powerful Subspace Theorem of W. Schmidt
 generated vast generalisations of these five assertions together with the statements of Proposition $\ref{Proposition:Thue}$. The methods of C.L. Siegel, F. Dyson, Th. Schneider, K.F. Roth and W.M. Schmidt are not effective. They allow us to give upper bounds for the number of solutions or of classes of solutions, but one had to wait till the major breakthrough of A. Baker  
  (\cite{BakerTNT}, \S~4.5; 
 \cite{LangECDA}, Chap.~VI;  
\cite{57:12383}, Chap.~3; 
 \cite{MR88h:11002}, Chap.~7; 
 \cite{SerreMordellWeil},  Chap.~8), %
 to obtain explicit bounds for the solutions themselves, which bounds we cannot avoid when we want to solve completely these equations.

 The $S$--unit equation $E_1+E_2=1$ in assertion $(iii)$ is in a non--homogeneous form. The associated homogeneous $S$--unit equation is $E_1+E_2=E_3$, a special case of the generalized Siegel unit equation which will be considered in \S$\ref{S:Unit-Vojta}$.

 \medskip 
\noindent
{\bf Definition.} 
Two solutions $(\varepsilon_0,\dots,\varepsilon_n)$ and $(\varepsilon'_0,\dots,\varepsilon'_n)$ in $(\OS^\times)^{n+1}$ of the equation $E_0+\cdots+E_n=0$ are said to be equivalent modulo $\OS^\times$ if the points of $\P^n(K)$ with projective coordinates $(\varepsilon_0:\cdots:\varepsilon_n)$ and $(\varepsilon'_0:\cdots:\varepsilon'_n)$ are the same. 

This last property means that there exists $\eta\in\OS^\times$ such that 
$$
\varepsilon'_j=\eta\varepsilon_j \quad \hbox{for } \quad 0\le j\le n.
$$

 \medskip
\noindent
{\it Proof of Proposition $\ref{Proposition:equivalence}$. } 
If the homogeneous form $F$ of degree $n \ge 3$ in assertion $(i)$ is such that $F(X,1)$ has at least three
linear factors involving three distinct roots $\alpha_1,\alpha_2,\alpha_3$ in $K$,
then there exists a homogeneous form $H(X, Y ) \in K[X, Y ]$ of degree $n - 3 \ge 0$ such that
\begin{equation}\label{Equation:H}
F(X,Y)=(X-\alpha_1Y)(X-\alpha_2Y)(X-\alpha_3Y)H(X,Y),
\end{equation}
 where the polynomial $H(X,1)$ needs not be 
  a monic polynomial and may have its
roots outside $K$ (though assuming $H(X,1)$ to be monic with roots in $K$ would not restrict the generality).
Moreover, we let $\denominator\in \Z$ be a positive integer such that $\denominator H \in \ZK[X,Y]$.

 We are going to prove the implications
 $$
 (i)\Longrightarrow (ii)\Longrightarrow (iii) \Longrightarrow (i) \quad\hbox{and}\quad 
 (iii)\Longleftrightarrow (iv) 
 \quad\hbox{and}\quad (v)\Longrightarrow (iii). 
 $$ 
This will complete the proof of Proposition $\ref{Proposition:equivalence}$.

\par
\begin{proof}[\quad$ (i)\Longrightarrow(ii) $] 
We make a change of variables 
 $$
 X'=X-Y, \quad Y'=X+Y
 $$
and we apply $(i)$
to the cubic form $F(X',Y')=X'(X'-Y')(X'+Y')$.
\end{proof}

\par
\begin{proof}[\quad$ (ii)\Longrightarrow(iii) $] 
Let $(\varepsilon_1,\varepsilon_2)\in\OS^\times$ satisfy $\varepsilon_1+\varepsilon_2=1$. Set $x=1$ and $y=\varepsilon_1$, so that 
$$
xy(x-y)=\varepsilon_1 \varepsilon_2.
$$
Each class modulo $\OS^\times$ of solutions $(x,y,\varepsilon)\in\OS^2\times\OS^\times$ of $XY(X-Y)=E$ contains a unique element with the first component $1$, namely $(1, x^{-1}y,x^{-3}\varepsilon)$. Since there is a finite number of classes of solutions, the set of $(1,\varepsilon_1,\varepsilon_1\varepsilon_2)$ with $\varepsilon_1+\varepsilon_2=1$ is finite, hence there is only a finite number of $\varepsilon_1\!$'s in $\OS^\times$ such that $1-\varepsilon_1\in\OS^\times$. 
\end{proof} 

\par
\begin{proof}[\quad$ (iii)\Longrightarrow(i) $]
Suppose that the assertion $(iii)$ is true and that $(x,y,\varepsilon)\in\OS^2\times\OS^\times$ is a solution of the equation $F(X,Y)=kE$. Write, as in $(\ref{Equation:H})$, 
$$
F(X,Y)=(X-\alpha_1Y)(X-\alpha_2Y)(X-\alpha_3Y)H(X,Y),
$$
where $\alpha_1,\alpha_2,\alpha_3$ are three roots of $F(X,1) $ which are distinct and in $K$.

 Define $\beta_i=x-\alpha_i y$ ($i=1,2,3$) so that $\beta_1\beta_2\beta_3H(x,y)=k\varepsilon$. Then we eliminate
 $x$ and $y$ from these three linear relations defining $\beta_1$, $\beta_2$ and $\beta_3$ to obtain the homogeneous unit equation (already considered by Siegel)
\begin{equation}\label{Equation:unites}
(\alpha_1-\alpha_2)\beta_3+(\alpha_2-\alpha_3)\beta_1+(\alpha_3-\alpha_1)\beta_2=0.
\end{equation}
 Define $S$ to be the set of places given by the assertion $(i)$, and apply $(iii)$ with the set $S'$ obtained by adding to $S$ the places of $K$ dividing numerators and denominators of the fractional principal ideals $(k)$, $(\denominator )$, $(\alpha_i-\alpha_j)$ ($1\le i<j\le 3$). Hence the three terms $(\alpha_i-\alpha_j)\beta_k$ of the left member of $(\ref{Equation:unites})$ are $S'$--units. We deduce from $(iii)$ that the quotients $\beta_i/\beta_j$ ($i,j=1,2,3$) belong to a fixed finite set, say, $\{\gamma_1, \dots , \gamma_t\} $ which is independent of the solution $(x,y,\varepsilon)$ considered. Suppose that $\beta_2 = \gamma \beta_1$ with $\gamma\in \{\gamma_1, \dots , \gamma_t\} $. Set $\eta=\beta_1$ (recall that $\beta_1$ is an $S'$--unit), 
 $$
x_0=\frac{\alpha_1 \gamma - \alpha_2}{\alpha_1 -\alpha_2} , \quad 
y_0= \frac{\gamma -1}{\alpha_1 -\alpha_2} 
\quad\hbox{
and
}
\quad 
\varepsilon_0=
k^{-1} F(x_0,y_0).
$$
Then from the values of $\beta_1$ and of $\beta_2$ ($ = \gamma \beta_1$), we obtain
$$
x=x_0\eta,\quad y=y_0\eta,\quad \varepsilon=\varepsilon_0\eta^n.
$$
We deduce that modulo $\OSprime^\times$ there is only a finite number of classes of solutions of $F(X,Y)=kE$.
This allows us to conclude that the assertion $(i)$ is true. 
\end{proof}

\par
\begin{proof}[\quad$ (iv)\Longrightarrow(iii) $]
Let $\calE$ be the set of $(\varepsilon_1,\varepsilon_2)\in\OS^\times\times\OS^\times$ for which $\varepsilon_1+\varepsilon_2=1$. 
Then the set
$$
\{
(\varepsilon_1:1) \, \mid \, \hbox{there exists $\varepsilon_2\in\OS^\times$ such that $(\varepsilon_1,\varepsilon_2)\in \calE$}\}
$$
is a set of $S$--integral points of $\P^1(K)\setminus\{
 {\bf 0}, {\bf 1},{\boldsymbol \infty}\}$, where 
$ {\bf 0}, {\bf 1},{\boldsymbol \infty}$ are defined in $(\ref{Equation:ZeroUnInfini})$, 
 hence it is finite by $(iv)$, and $(iii)$ follows. 
 \end{proof} 
 
 \par
\begin{proof}[\quad$ (iii)\Longrightarrow(iv) $] 
Let $A$ be a set of $S$-integral points $(x:y)$ 
on $ \P^1(K)$ minus three points chosen (without loss of generality) to be 
$ {\bf 0}, {\bf 1},{\boldsymbol \infty}$, as defined in $(\ref{Equation:ZeroUnInfini})$. Since $A$ is contained in $\P^1(K)\setminus\{(1:0)\}$, each element $P$ in $A$ has projective coordinates $(u:1)$ with $u\in K$. Since $P$ does not reduce modulo a finite place $v$ not in $S$ to any of the three points $(1:0)$, $(0:1)$, $(1:1)$, 
it follows that $u$ and $u':=1-u$ are $S$--units. From $u+u'=1$ we deduce from $(iii)$ that the set of such $u$'s is finite, hence $A$ is finite. 
\end{proof}

\par
\begin{proof}[\quad$ (v)\Longrightarrow(iii) $]
Consider the system of projective coordinates $(E:E_1:E_2)$ on $\P^2(K)$ and  the four hyperplanes $H_0$, $H_1$, $H_2$, $H_3$ defined respectively by the equations
$$
\hbox{	$E=0$, \;$E_1=0$,\; $E_2=0$,\; $E_1+E_2=0$.}
$$
Let $\calE$ be the subset of $(\OS^\times)^2$ which consists of the couples $(\varepsilon_1,\varepsilon_2)$ of $S$--units verifying $ \varepsilon_1+\varepsilon_2=1$. For any $\varepsilon$ in $\OS^\times$, the point of $P^2(K)$ with projective coordinates $(\varepsilon:\varepsilon_1:\varepsilon_2)$ is an $S$--integral point of $\P^2(K)\setminus(H_0\cup H_1\cup H_2\cup H_3)$. Indeed, such a point reduces modulo each place $v$ of $K$ not in $S$ to a point on the projective plane over the residue field which is not on any of the four corresponding hyperplanes. 
We deduce from $(v)$ the existence of a non--zero homogeneous polynomial $P(E,E_1,E_2) $ in $K[E,E_1,E_2]$ which is annihilated by each of the points in $\OS^\times\times \calE$.
Assuming (without loss of generality) that $\OS^\times$ is infinite, it follows that for each $(\varepsilon_1, \varepsilon_2)\in\calE$ the polynomial $P(E,\varepsilon_1, \varepsilon_2)\in K[E]$ is the zero polynomial, 
 whereupon the assertion $(iii)$ is true.
 \end{proof} 
 
 This concludes the proof of the fact that indeed the first four assertions of Proposition $\ref{Proposition:equivalence}$ are equivalent to one another and are consequences of the fifth one. 
 \hfill
 $\Box$
 
 \medskip

It may be a fruitful goal to devise further proofs of direct implications between the assertions of Proposition $\ref{Proposition:equivalence}$: taking shortcuts may be useful for further investigations,
and we hope that the proofs of these implications are interesting {\it per se}. In particular, there are at least two points of view for obtaining sharper statements, and for each of them there is a whole variety of methods, involving deep and powerful tools from Diophantine approximation. Firstly, by having an effective statement via an explicit upper bound for the number of solutions or of classes of solutions. Secondly, by giving an upper bound for the height of the solutions, which is the effective way of dealing with the theory. When it comes to establishing such explicit versions of those mentioned implications, using no detours may prove a winning strategy to obtain more precise bounds.
 This is why we now prove directly the next implication.

\par
\begin{proof}[\quad $(ii)\Longrightarrow (i) $]
Suppose that the assertion $(ii)$ is true. We want to prove $(i)$ for a homogeneous binary form of degree $n$ that we write as in $(\ref{Equation:H})$. Change the variables as follows: set
$$
X'=(\alpha_2-\alpha_3)(X-\alpha_1Y), \quad 
Y'=(\alpha_1-\alpha_3)(X-\alpha_2Y), 
$$ so that
$$
X'-Y'=(\alpha_2-\alpha_1)(X-\alpha_3Y).
$$
Given the set $S$ of $(i)$, we will use the set $S'$ of $(ii)$ which is the union of $S$ with the set of places of $K$ dividing numerators and denominators of the fractional principal ideals $(\denominator )$, $(\alpha_1)$, $(\alpha_2)$, $(\alpha_3)$, 
$(\alpha_2-\alpha_3)$, $(\alpha_1-\alpha_3)$, $(\alpha_2-\alpha_1)$ and $(k)$, and also the principal ideals generated by the coefficients of the form $\denominator H$. 

If $(x,y,\varepsilon)\in\OS^2\times\OS^\times$ satisfies $F(x,y)=k\varepsilon$ where $F$ is given by $(\ref{Equation:H})$, then the corresponding elements $x'$, $y'$ obtained by the change of variables are $S'$--integers with the property that the number $\varepsilon':=x'y'(x'-y')$ is an $S'$--unit. 
 The assertion $(ii)$ provides the finiteness of the set of classes modulo $\OSprime^\times$ of solutions $(x',y',\varepsilon')$ in $\OSprime^2\times \OSprime^\times$ of the equation $X'Y'(X'-Y')=E' $.
 Since the matrix attached to the above change of variables has determinant
$(\alpha_1 -\alpha_2)(\alpha_1-\alpha_3)(\alpha_2-\alpha_3)\neq 0$, we deduce that the assertion $(i)$ is true. 
 \end{proof} 
 
 From the equivalence between $(i)$ and $(ii)$, we deduce that these two properties are also equivalent to the special case of $(i)$ where one assumes $H=1$, (hence the form $F$ is a cubic form with $F(X,1)$ a monic polynomial), so that 
 $$
F(X,Y)=(X-\alpha_1Y)(X-\alpha_2Y)(X-\alpha_3Y)\in K[X,Y]
$$ 
and where one assumes also $k=1$. 
 
 \medskip
 We conclude this section with the remark that it would be very interesting to produce a proof of $(v)$ as a consequence of the previous assertions; (we already pointed out that all of these assertions, including $(v)$, are theorems). 
Indeed, the assertion $(v)$ has further far reaching consequences, besides assertions $(i)$ to $(iv)$. In particular it can be used to prove that any homogeneous diophantine unit equation 
$$
E_1+E_2+E_3+E_4=0
$$
has only finitely many solutions $(\varepsilon_1,\varepsilon_2,\varepsilon_3,\varepsilon_4)$ in $S$--units for which none of the three subsums
$$
\varepsilon_1+\varepsilon_2,\quad \varepsilon_1+\varepsilon_3,\quad \varepsilon_1+\varepsilon_4
$$
vanishes. So far, no effective proof of this result has been produced in general, while effective versions of assertions $(i)$ to $(iv)$ are known.

\section{Generalized Siegel unit equation and integral points} \label{S:Unit-Vojta}

In this section we prove the equivalence between two main Diophantine results, both consequences of Schmidt's Subspace Theorem 
  (\cite{MR0568710}, Chap.~6;	
 \cite{UZ1}, Chap.~II, \S1; 
 \cite{1115.11034}; 
 \cite{MR2465098}, \S7.5; 
 \cite{UZ2}, Chap.~2).  
Again the proof of the equivalence is elementary, while the proof of the truth of each of them lies much deeper. 

 
 \begin{proposition}\label{Proposition:EquivalenceUnitEqnHyperplans}
Let $K$ be a number field. 
The following two assertions are equivalent. 
\par
$(i)$ 
Let $n\ge 1$ be an integer and let $S$ a finite set of places of $K$ including the archimedean places. Then the equation 
$$
E_0+\cdots+E_n=0
$$ 
has only finitely many classes modulo $\OS^\times$ of solutions $(\varepsilon_0,\ldots,\varepsilon_n)\in(\OS^\times)^{n+1}$ for which no proper subsum $\sum_{i\in I} \varepsilon_i$ vanishes, with $I$ being a subset of $\{0,\ldots,n\}$, with at least two elements and at most $n$. 
\par
$(ii)$
Let $n\ge 1$ be an integer and let $S$ a finite set of places of $K$ including the archimedean places. Then for any set of $n+2$ distinct hyperplanes $H_0,\dots,H_{n+1}$ in $\P^n(K)$, the set of $S$--integral points of $\P^n(K)\setminus(H_0\cup\cdots\cup H_{n+1})$ is contained in a finite union of 
hyperplanes 
of $\P^n(K)$. 
 \end{proposition}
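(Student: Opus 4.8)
The plan is to treat $(i)$ and $(ii)$ as two families of statements indexed by $n\ge1$ and to prove the equivalence of the families through the dictionary that matches assertion $(ii)$ on $\P^n(K)$ (with its $n+2$ hyperplanes) to assertion $(i)$ for the $(n+2)$-term equation $E_0+\cdots+E_{n+1}=0$. Throughout I would freely enlarge $S$, which is legitimate by \S\ref{S:Background}: after such an enlargement every point I construct is $S$--integral exactly when a prescribed finite list of linear forms takes $S$--unit values, and every auxiliary constant (coefficients of hyperplanes, entries of change-of-coordinate matrices, the $b_i$ and $c_i$ below) becomes an $S$--unit, so that products of these constants with $S$--units are again $S$--units.

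For $(i)\Rightarrow(ii)$, let $L_0,\dots,L_{n+1}$ be linear forms cutting out the $n+2$ hyperplanes in the variables $X_0,\dots,X_n$. Since $n+2$ forms in an $(n+1)$--dimensional space are dependent, there is a nontrivial relation $\sum_{i=0}^{n+1}c_iL_i=0$, and in the generic case of hyperplanes in general position this relation is unique up to scalar with all $c_i\neq0$. Evaluating at an $S$--integral point $P=(x_0:\cdots:x_n)$ of the complement, the quantities $\varepsilon_i:=c_iL_i(x)$ are $S$--units with $\varepsilon_0+\cdots+\varepsilon_{n+1}=0$, that is, a solution of the $(n+2)$--term equation. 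I would then invoke $(i)$: its non--degenerate solutions fall into finitely many classes modulo $\OS^\times$, and since each class fixes the ratios $(L_0(x):\cdots:L_n(x))$ it fixes $P$ (invert the linear isomorphism given by the $n+1$ independent forms $L_0,\dots,L_n$), accounting for finitely many points. Every other point is degenerate, so some proper subsum $\sum_{i\in I}c_iL_i(x)=0$ vanishes, forcing $P$ onto the hyperplane $\{\sum_{i\in I}c_iL_i=0\}$, which is a genuine hyperplane because any $|I|\le n+1$ of the $L_i$ are independent; there are finitely many subsets $I$, hence finitely many such hyperplanes, and the exceptional points can be absorbed into them. If the hyperplanes are not in general position I would replace the full relation by a minimal dependent subfamily and argue identically; this case is in fact easier, the relation then holding identically.

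For $(ii)\Rightarrow(i)$ I would argue by induction on $m$, the case $m=1$ being trivial. Given a non--degenerate solution of $E_0+\cdots+E_m=0$, put $\varepsilon_m=-(\varepsilon_0+\cdots+\varepsilon_{m-1})$ and attach the point $Q=(\varepsilon_0:\cdots:\varepsilon_{m-1})\in\P^{m-1}(K)$. Since the $\varepsilon_i$ and $\varepsilon_0+\cdots+\varepsilon_{m-1}=-\varepsilon_m$ are $S$--units, $Q$ is an $S$--integral point of $\P^{m-1}(K)$ minus the $m+1$ hyperplanes $\{Y_i=0\}$ $(0\le i\le m-1)$ and $\{Y_0+\cdots+Y_{m-1}=0\}$. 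Assertion $(ii)$ at level $m-1$ places all these $Q$ in a finite union of hyperplanes $\Pi_1,\dots,\Pi_N$, and it remains to bound the classes inside a single $\Pi$. Such a $\Pi$ gives an extra relation $\sum_{i\in I'}b_i\varepsilon_i=0$ with $I'\subseteq\{0,\dots,m-1\}$ and $2\le|I'|\le m$. Decomposing $I'$ into its minimal vanishing subsums, the induction hypothesis applied to each (shorter, non--degenerate) block pins its internal ratios down to finitely many values; collapsing each block to a single $S$--unit $c_ts_t$ then turns $E_0+\cdots+E_m=0$ into a strictly shorter unit equation. Non--degeneracy of the original solution guarantees each $c_t\neq0$ (otherwise a block would give a vanishing proper subsum) and, by the same token, that the collapsed equation is again non--degenerate; the induction hypothesis applied to it yields finitely many classes, which together with the finitely many internal ratios determine the class of $Q$.

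The hard part is exactly this last descent: assertion $(ii)$ only furnishes Zariski non--density (containment in finitely many hyperplanes), whereas $(i)$ demands genuine finiteness of classes. Converting one into the other is what forces the induction on the number of terms, and the precise role of the no--vanishing--subsum hypothesis is to ensure that every equation manufactured along the descent stays non--degenerate, so that the inductive hypothesis keeps applying and the combinatorial bookkeeping over subsums terminates. By contrast, the direction $(i)\Rightarrow(ii)$ is comparatively routine once the single linear relation among the defining forms has been extracted.
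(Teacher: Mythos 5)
Your proposal is correct, and your direction $(i)\Rightarrow(ii)$ is in substance the paper's own argument: extract a dependence relation with nonzero coefficients from a minimal (reordered) subfamily of the defining forms, evaluate it at an $S$--integral point to produce a solution of the unit equation in $S$--units, and then either the solution is non--degenerate (finitely many classes, each confining the point to a proper linear subspace) or some proper subsum vanishes (finitely many subsum hyperplanes). Your direction $(ii)\Rightarrow(i)$, however, takes a genuinely different route. The paper keeps the solution set $\Etilde\subset\P^n(K)$ and the equation fixed and descends on dimension: assertion $(ii)$ is re--applied inside each of the finitely many linear subspaces produced at the previous stage, and the combinatorial Lemma~\ref{Lemma:52implique51} and Corollary~\ref{Corollaire:52implique51} (an $s$--dimensional subspace of $X_0+\cdots+X_n=0$ through a non--degenerate point meets the coordinate hyperplanes in at least $s+2$ distinct hyperplanes of that subspace) are exactly what guarantees that $(ii)$ remains applicable at every level, down to dimension $0$, where finiteness falls out. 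You instead induct on the number of terms: $(ii)$ is invoked only once per level, and the extra linear relation it furnishes is then processed algebraically --- decomposition into minimal vanishing blocks, pinning each block by the inductive hypothesis (which is $(i)$ for fewer terms, after absorbing the hyperplane coefficients $b_i$ and then the collapsed coefficients $c_t$ into a finite enlargement of $S$), and collapsing to a strictly shorter non--degenerate unit equation. Your route dispenses with the hyperplane--trace lemma altogether, at the price of repeated enlargements of $S$ and of a strong induction on $(i)$ itself; the paper's route never re--invokes $(i)$ and needs no collapsing, but cannot avoid the geometric lemma. Both transfers are elementary and propagate quantitative information (number of classes versus number of exceptional hyperplanes) in both directions. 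Two details you should write out to make the argument airtight: in the non--general--position case of $(i)\Rightarrow(ii)$, a non--degenerate class fixes only the ratios $\bigl(L_i(x)\bigr)_{i\in J}$, hence confines $P$ to a fiber of a linear projection rather than to a point; that fiber lies on the hyperplane $\varepsilon_{i_2}L_{i_1}-\varepsilon_{i_1}L_{i_2}=0$ for any two indices $i_1\neq i_2$ of $J$, so the conclusion is unaffected. And in $(ii)\Rightarrow(i)$, the decomposition of $I'$ into minimal vanishing blocks depends on the particular solution, so you must observe that a finite set $I'$ admits only finitely many partitions and run the collapsing argument separately for each type.
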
 
 
 One may remark that the case $n=1$ of assertion $(i)$ in Proposition $\ref{Proposition:EquivalenceUnitEqnHyperplans}$ is nothing else than assertion $(iii)$ of Proposition $\ref{Proposition:equivalence}$, and that 
 the case $n=1$ (resp. $n=2$) of assertion $(ii)$ of Proposition $\ref{Proposition:EquivalenceUnitEqnHyperplans}$ is nothing else than assertion $(iv)$ (resp. $(v)$) of Proposition $\ref{Proposition:equivalence}$.

 Assertion $(i)$ of Proposition $\ref{Proposition:EquivalenceUnitEqnHyperplans}$ on the generalized unit equation (see 
  \cite{MR883451}, 
 Theorem 2.3.1;
 \cite{UZ1},  
Chap.~II, \S2 and \S3; 
 \cite{MR2465098}, 
Theorem 7.24) 
 has been proved independently by J.H.~Evertse on the one hand, by H.P.~Schlickewei and A.J.~van der Poorten (1982) on the other hand. A special (but significant) case had been obtained earlier by E.~Dubois and G.~Rhin
 (see \cite{UZ1}, 
 Chap.~II, \S2). 

There is a more general version of the assertion $(i)$ of Proposition $\ref{Proposition:EquivalenceUnitEqnHyperplans}$, which is known to be true, where the number field is replaced by any field $K$ of zero characteristic, and the group of $S$--units is replaced by any subgroup of $K^\times$ of finite rank. The first general result in this direction is due to M.~Laurent; it has been extended by Schmidt, Evertse, van der Poorten and Schlickewei (see 
\cite{1115.11034}, 
\S7.4), and recently refined by Amoroso and Viada (Theorem 6.2 of \cite{AV}).

In his thesis on integral points on a variety (1983), P.~Vojta started a fertile analogy between Diophantine approximation and Nevanlinna theory. In the case of holomorphic functions, the analog of assertion $(i)$ of Proposition $\ref{Proposition:EquivalenceUnitEqnHyperplans}$ is a result of E.~Borel in 1896 (see 
\cite{MR883451} 
Chap.~2, \S4) according to which, if $g_1,\ldots,g_n$ are entire functions satisfying $e^{g_1}+\cdots+e^{g_n}=1$, then some $g_i$ is constant. 
A connection between assertion $(i)$ of Proposition $\ref{Proposition:EquivalenceUnitEqnHyperplans}$ on $S$--units and integral points on the complement in a projective space of a divisor was found by P.~Vojta. In 1991, Min Ru and Pit Man Wong considered the case when the divisor is a union of $2n+1$ hyperplanes in general position and showed that the set of $S$--integral points is finite. Independently, K.~Gy\H{o}ry proved the same result in 1994, but formulated it in terms of decomposable form equations (see {\it e.g.} 
\cite{MR2438848}, 
 p.~261 for the dictionary between decomposable form equations and integral points on the complements of hypersurfaces). Further related results are due to 
Ta Thi Hoai An, Julie Tzu-Yueh Wang, Zhihua Chen, and more recently Aaron Levin (see \cite{MR2438848}
).

Assertion $(ii)$ of Proposition $\ref{Proposition:EquivalenceUnitEqnHyperplans}$ may be seen as a theorem on integral points which partially extends Siegel's Theorem to higher dimensional varieties 
\cite{MR883451}. 


It is proved in \cite{MR961695}, Section 4, that  $(i)$  of Proposition $\ref{Proposition:EquivalenceUnitEqnHyperplans}$,  and hence $(ii)$  as well, are equivalent to a
general finiteness theorem concerning decomposable form equations over $\OS$. (This equivalence is proved in \cite{MR961695} in the more general case when the ground ring is an arbitrary finitely generated domain over  $\Z$.

No effective version of the assertions $(i)$ and $(ii)$ is known. On the one hand, if one could prove an effective version of one of these two assertions, the proof we give for the equivalence between them would provide an effective version of the other. On the other hand, quantitative estimates are known, namely explicit upper bounds for the number of exceptions. The proof of the equivalence between $(i)$ and $(ii)$ shows also that an explicit upper bound for the number of exceptional classes in assertion $(i)$ yields an explicit upper bound for the number of exceptional hyperplanes in $(ii)$, and conversely. 

 In the proof of $(ii)\Longrightarrow (i)$, we shall use the following auxiliary result. Denote by $L_0$ the hyperplane of $\P^n(K)$ of equation $X_0+\cdots+X_n=0$ and, for $i=0,\dots,n$, by $H_i$ the hyperplane of equation $X_i=0$. 

\begin{lemma}\label{Lemma:52implique51}
Let $L$ be a projective line of $\P^n(K)$ contained in $L_0$.  Assume that $L$ contains a point of projective coordinates   $(u_0:\cdots:u_n)$ such that $u_0\cdots u_n\not=0$. Assume further that no sum $\sum_{i\in I} u_i$ vanishes, when $I$ is a subset of $\{0,\dots,n\}$ with at least one and at most $n$ elements.  Then among the $n+1$ subspaces 
$$
L\cap H_0,\dots,L\cap H_n,
$$
at least $3$ are distinct. 
\end{lemma}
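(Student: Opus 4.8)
The plan is to show first that, under the hypotheses, each intersection $L\cap H_i$ is a single point, and then to rule out the possibility that fewer than three of these points are distinct. I would begin by observing that the point $(u_0:\cdots:u_n)$ of $L$ has every coordinate nonzero, so it lies on none of the hyperplanes $H_i$; hence $L\not\subseteq H_i$ for each $i$. Since a projective line and a projective hyperplane of $\P^n(K)$ always meet (the corresponding vector subspaces, of dimensions $2$ and $n$ inside $K^{n+1}$, intersect in dimension at least $2+n-(n+1)=1$), the intersection $L\cap H_i$ is nonempty, and as $L$ is not contained in $H_i$ it has dimension $0$, i.e. it is exactly one point $P_i$. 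The statement to prove thus becomes: among $P_0,\dots,P_n$ at least three are distinct.

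Next I would argue by contradiction, supposing that at most two of the $P_i$ are distinct. The case where they all coincide is immediate: a common point would lie in every $H_i$, hence have all coordinates zero, which is impossible for a point of $\P^n(K)$. So I am reduced to the case of exactly two distinct points $Q$ and $Q'$, and I would partition $\{0,\dots,n\}$ into the two nonempty sets $I=\{i:P_i=Q\}$ and $J=\{j:P_j=Q'\}$.

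The key step is to read off the coordinate patterns of $Q$ and $Q'$ and combine them with $L\subseteq L_0$. Writing $Q=(q_0:\cdots:q_n)$ and $Q'=(q_0':\cdots:q_n')$, the equality $P_i=Q$ for $i\in I$ means $Q\in H_i$, that is $q_i=0$ for $i\in I$; moreover $Q\notin H_j$ for $j\in J$ (else $Q=L\cap H_j=Q'$), so $q_j\neq 0$ for $j\in J$, and symmetrically $q_j'=0$ for $j\in J$ while $q_i'\neq 0$ for $i\in I$. Since $Q$ and $Q'$ are two distinct points of the line $L$, they span it, so the given point can be written $u_k=aq_k+bq_k'$ for suitable $a,b\in K$. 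For $i\in I$ this gives $u_i=bq_i'$, and $u_i\neq 0$ forces $b\neq 0$. Now $Q'\in L\subseteq L_0$ yields $\sum_{k=0}^n q_k'=0$, and since $q_j'=0$ for every $j\in J$ this collapses to $\sum_{i\in I} q_i'=0$. Therefore $\sum_{i\in I} u_i=b\sum_{i\in I} q_i'=0$, where $I$ is a subset of $\{0,\dots,n\}$ with $1\le |I|\le n$ (both $I$ and $J$ being nonempty). This contradicts the hypothesis that no such proper subsum of the $u_i$ vanishes, completing the argument.

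The computations here are all elementary linear algebra on the line $L$; the one point requiring care, and the conceptual heart of the proof, is recognizing that two coincident families of intersection points force the supports of $Q$ and $Q'$ to be complementary, so that the relation $\sum_k q_k'=0$ inherited from $L_0$ degenerates into a forbidden vanishing proper subsum. The hypothesis on the coordinates of $(u_0:\cdots:u_n)$ enters only to guarantee that each $L\cap H_i$ is a genuine point and that the coefficient $b$ above is nonzero.
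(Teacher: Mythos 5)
Your proof is correct and follows essentially the same route as the paper's: both reduce to the case of exactly two distinct intersection points, observe that their supports are complementary subsets of $\{0,\dots,n\}$, and use $L\subseteq L_0$ to produce a vanishing proper subsum of the $u_i$. The only cosmetic difference is that you express $(u_0:\cdots:u_n)$ in the basis $\{Q,Q'\}$, whereas the paper identifies the two intersection points directly as $(0:\cdots:0:u_{t+1}:\cdots:u_n)$ and $(u_0:\cdots:u_t:0:\cdots:0)$; the content is identical.
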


From the assumption that 
$u_i\not =0$ for $0\le i\le n$,  it follows that for $0\le i\le n$, the line $L$ is not contained in $H_i$, and therefore $L\cap H_i$ is a point of $L$. 

For us, the useful consequence of Lemma $\ref{Lemma:52implique51}$ is the following one.

\begin{corollary}\label{Corollaire:52implique51}

Let $L$ be a projective linear subspace of $\P^n(K)$ contained in $L_0$. Let $s$ be the dimension of $L$.
Assume that $L$ contains a point of projective coordinates $(u_0 : \cdots: u_n)$ such that $u_0 \cdots u_n \neq  0$
and such that no subsum $\sum_{i\in I }u_i$ vanishes, with $I$ being a subset of $\{0,\ldots,n\}$, with at least two elements and at most $n$.  Then for any $s=0,\dots,n$, at least $s+2$ hyperplanes of $L$ among 
$$
L\cap H_0,\dots,L\cap H_n
$$
are distinct. 
\end{corollary}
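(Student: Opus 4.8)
The plan is to convert the statement about the intersections $L\cap H_i$ into a counting problem about restrictions of the coordinate forms, carried out in a dual vector space. Write $L=\P(W)$, where $W\subseteq K^{n+1}$ is a linear subspace of dimension $s+1$; the hypothesis $L\subseteq L_0$ means exactly that $W$ is contained in the hyperplane of $K^{n+1}$ cut out by $X_0+\cdots+X_n=0$. Let $\lambda_i\in W^*$ be the restriction to $W$ of the coordinate form $X_i$. First I would record three facts. (a) Since the point $(u_0:\cdots:u_n)$ lies in $L$ with $u_0\cdots u_n\neq0$, each $\lambda_i$ is nonzero, so each $L\cap H_i=\P(\ker\lambda_i)$ is a genuine hyperplane of $L$ (this is the observation already made after Lemma \ref{Lemma:52implique51}). (b) $L\cap H_i=L\cap H_j$ if and only if $\ker\lambda_i=\ker\lambda_j$, i.e.\ if and only if $\lambda_i$ and $\lambda_j$ are proportional; hence the number of distinct hyperplanes among $L\cap H_0,\dots,L\cap H_n$ equals the number of distinct points $[\lambda_i]$ of $\P(W^*)$. (c) Because the $X_i$ span $(K^{n+1})^*$ and restriction to $W$ is surjective onto $W^*$, the forms $\lambda_0,\dots,\lambda_n$ span $W^*$, which has dimension $s+1$.

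From fact (c), at least $s+1$ of the $\lambda_i$ are linearly independent, hence pairwise non-proportional, so there are already at least $s+1$ distinct hyperplanes. The heart of the matter, and the step I expect to be the main obstacle, is to gain the one extra hyperplane, namely to rule out the case of exactly $s+1$ distinct directions; this is precisely where the no-vanishing-subsum hypothesis must enter.

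So I would argue by contradiction, assuming there are exactly $s+1$ distinct directions. Since these span $W^*$ and $\dim W^*=s+1$, one representative $\mu_k$ per direction yields a basis $\mu_1,\dots,\mu_{s+1}$ of $W^*$. Partition $\{0,\dots,n\}$ into classes $C_1,\dots,C_{s+1}$, where $i,j$ lie in the same class when $\lambda_i,\lambda_j$ are proportional, and write $\lambda_i=c_i\mu_k$ for $i\in C_k$. Substituting into the relation $\lambda_0+\cdots+\lambda_n=0$ (which holds because $W\subseteq L_0$) and using the linear independence of the $\mu_k$, I would read off that $\sum_{i\in C_k}c_i=0$ for every $k$. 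Evaluating at a vector $\tilde u\in W$ lifting the given projective point gives $c_i\,\mu_k(\tilde u)=\lambda_i(\tilde u)$, a nonzero multiple of $u_i$, with $\mu_k(\tilde u)\neq0$; this turns $\sum_{i\in C_k}c_i=0$ into $\sum_{i\in C_k}u_i=0$ for every class $C_k$.

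It then remains to see that these vanishing class-sums contradict the hypotheses. Since $s\ge1$ there are at least two classes, so every $C_k$ is a nonempty proper subset of $\{0,\dots,n\}$, i.e.\ $1\le|C_k|\le n$. If some class is a singleton $\{i\}$, then $u_i=0$, contradicting $u_0\cdots u_n\neq0$; and if $2\le|C_k|\le n$, then $\sum_{i\in C_k}u_i=0$ is a forbidden vanishing subsum. Either way we reach a contradiction, so there must be at least $s+2$ distinct hyperplanes, which is the desired conclusion. I would finally remark that the case $s=1$ recovers Lemma \ref{Lemma:52implique51}, that the argument genuinely requires $s\ge1$ (for $s=0$ the point lies off every $H_i$ and the statement is degenerate), and that the inclusion $L\subseteq L_0$ forces $s\le n-1$, so the meaningful range is $1\le s\le n-1$.
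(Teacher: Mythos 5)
Your proof is correct, but it follows a genuinely different route from the paper's. The paper disposes of $s=0$ and $s=1$ separately (the latter being exactly Lemma \ref{Lemma:52implique51}) and then, for $2\le s\le n-1$, argues geometrically: assuming at most $s+1$ distinct hyperplanes among the $L\cap H_i$, it picks a point $\bv$ lying on all of them except a chosen one and applies Lemma \ref{Lemma:52implique51} to the line through $\bv$ and $(u_0:\cdots:u_n)$, which would meet $H_0,\dots,H_n$ in at most two points --- a contradiction. You instead work entirely in the dual space $W^*$: the proportionality classes $C_k$ of the restricted coordinate forms $\lambda_i$, the relation $\lambda_0+\cdots+\lambda_n=0$ forced by $L\subseteq L_0$, and the linear independence of one representative per class turn the contradiction hypothesis into vanishing class sums $\sum_{i\in C_k}u_i=0$, which the hypotheses forbid (singleton classes contradict $u_i\neq 0$, larger ones give a forbidden subsum). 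What each approach buys: the paper's reduction reuses the already-proved Lemma and stays geometric, but it silently relies on the existence of the point $\bv$, which is not automatic for an arbitrary configuration of $\le s+1$ hyperplanes (think of concurrent lines in a plane) and in fact requires precisely your spanning fact (c) --- with only $s+1$ distinct directions spanning the $(s+1)$-dimensional space $W^*$, the forms are linearly independent, so no one hyperplane contains the intersection of the others. Your argument makes that linear algebra explicit, treats all $s\ge 1$ uniformly, and subsumes Lemma \ref{Lemma:52implique51} as the case $s=1$ instead of invoking it; both proofs locate the role of the no-vanishing-subsum hypothesis in the same place, and both rightly regard $s=0$ as degenerate (it is never needed in the application, where $s=n-k-1\ge 1$).
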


\begin{proof}

This corollary 
is trivial when $s=0$, that is when $L$ is a point, since $H_0 \cap H_1\cap \dots \cap H_n=\emptyset$. 
It  follows from 
Lemma $\ref{Lemma:52implique51}$ when $s=1$. 

Assume now $2 \le  s \le  n -1$.  
Suppose there are at most $s+1$ distinct hyperplanes $L\cap H_0,\dots ,L\cap H_n$. Given one of these hyperplanes, there exists a point $\bv$ which does not belong to this hyperplane but belongs to all the other ones. 
Let $L'$ be a line  through $\bv$ and the given $(u_0 :\cdots  : u_n) \in L$. Then $L'$  will intersect  $H_0, \dots, H_n$  in  at most $2$ points, contradicting Lemma $\ref{Lemma:52implique51}$.
\end{proof}

\begin{proof}[Proof of Lemma $\ref{Lemma:52implique51}$] 
The goal is to check that among the points 
$$
L\cap H_0,\; L\cap H_1,\; \dots, L\cap H_n,
$$
at least $3$ are distinct. It is obvious that there are at least two points,  because $H_0 \cap H_1\cap \dots \cap H_n=\emptyset$.
Let ${\bv}=(v_0:\cdots:v_n)$ be a point on $L$ distinct from ${\bu}=(u_0:\cdots:u_n)$, so that 
$$
L=\{x\bu+y\bv=(xu_0+yv_0:\cdots:xu_n+yv_n)\, \mid \, x,\; y \in K\}.
$$
If there are only two points among the intersections $L\cap H_i$, $(0\le i\le n)$, after reordering the indices, we may suppose $L\cap H_0 = L\cap H_1 = \cdots= L\cap H_t$ and $L\cap H_{t+1} = \cdots= L\cap H_n$ with $1\le t<n$. One deduces 
\begin{align}\notag
L\cap H_0 = L\cap H_1 = \cdots= L\cap H_t&=\{(0:\cdots:0:u_{t+1}:\cdots:u_n)\}
\\
\notag
&=\{(0:\cdots:0:v_{t+1}:\cdots:v_n)\}
\end{align}
and 
\begin{align}\notag
L\cap H_{t+1} = \cdots= L\cap H_n&=\{(u_0:u_1:\cdots:u_t:0:\cdots:0)\}
\\
\notag
&=\{(v_0:v_1:\cdots:v_t:0:\cdots:0)\}.
\end{align}
The condition $L\subset L_0$ then implies $u_0+u_1+\cdots+u_t=0$ and $u_{t+1}+\cdots+u_n=0$, a contradiction with the condition on the non--vanishing of proper subsums. 
\end{proof}

\par
\begin{proof}[Proof of Proposition $\ref{Proposition:EquivalenceUnitEqnHyperplans}$] 
$(i)\Longrightarrow (ii)$. 
Let $(X_0:\cdots:X_n)$ be a system of projective coordinates on $\P^n(K)$ and let $L_0,\ldots,L_{n+1}$ be homogeneous linear forms in $X_0,\dots,X_n$ such that, for $i=0,\ldots,n+1$, the hyperplane $H_i$ is given by the equation $L_i=0$. Let $r+1$ be the rank of the system of linear forms $L_0,\ldots,L_{n+1}$. Reorder the forms so that $L_0,\ldots,L_r$ are linearly independent, and such that $L_{r+1}$ can be written as $a_0L_0+\cdots+a_mL_m$ with $m\le r$ and $a_0,\dots,a_m$ non--zero elements in $K$. Let $Y_j=a_jL_j$ for $0\le j\le m$. 
Complete $Y_0,\dots,Y_m$ in order to get a new system of projective coordinates $Y_0,\ldots,Y_n$ on $\P^n(K)$.
We apply assertion $(i)$ of Proposition $\ref{Proposition:EquivalenceUnitEqnHyperplans}$ to the projective subspace $\P^m(K)$ of $\P^n(K)$ given by the equation $Y_{m+1}=\cdots=Y_n=0$ and to the $m+2$ hyperplanes 
$$
Y_0=0,\; \dots,\; Y_m=0,\; Y_0+\cdots+Y_m=0.
$$ 
The map 
$$
(y_0:\cdots:y_n)\longmapsto (y_0:\cdots:y_m)\in\P^m(K)
$$ 
is well defined on $\P^n(K)\setminus H_0$ (recall that $H_0$ is the hyperplane of equation $Y_0=0$), hence also on the set of $S$--integral points of $\P^n(K)\setminus(H_0\cup\cdots\cup H_{n+1})$.
An $S$--integral point of $\P^n(K)\setminus(H_0\cup\cdots\cup H_{n+1})$ has projective coordinates $(y_0:\cdots:y_n)$ such that $y_0,\ldots,y_m$ and $y_0+\cdots+y_m$ are $S$--units, hence by assertion $(i)$ of Proposition $\ref{Proposition:EquivalenceUnitEqnHyperplans}$, for all but a finite number of the projective points $(y_0:\cdots:y_m)$, the tuple $(y_0,\ldots,y_m)$ has a vanishing proper subsum. Therefore the set of $S$--integral points of $\P^n(K)\setminus(H_0\cup\cdots\cup H_{n+1})$ is contained in the union of finitely many linear subspaces. 
\par
$(ii)\Longrightarrow (i)$. 
Let us introduce the subset 
 $\Etilde$ of $\P^n(K)$ which consists of the points having projective coordinates $(\varepsilon_0:\cdots:\varepsilon_n)$ with $\varepsilon_i\in \OS^\times$, $\varepsilon_0+\cdots+\varepsilon_n=0$, and no subsum in the left hand side with at least two and at most $n$ terms being $0$. The goal is to prove that this set $\Etilde$ is finite. 
 By induction, we prove the following consequence of $(ii)$. 
 
\begin{quote}
$(*)$ 
{\it For $k=0,1,\dots,n-1$, there exist a finite set $J_k$ and linear projective spaces $L_{k,j}$ ($j\in J_k$), of dimensions $n-k-1$, such that $\Etilde$ is contained in the union of $L_{k,j}$ for $j\in J_k$.}
\end{quote}
This assertion $(*)$ is true for $k=0$ with $J_0=\{0\}$ and $L_{0,0}=L_0$. 

We wish to prove that for $ k=0,\dots,  n-2$, the assertion $(*)$ for $k$ implies the same $(*)$ for $k+1$. Fix $j\in J_k$. We deduce from  Corollary 
$\ref{Corollaire:52implique51}$  with $s=n-k-1$ that at least $s+2$ hyperplanes of $L_{k,j}$ among 
$$
L_{k,j}\cap H_0,\dots,L_{k,j}\cap H_n
$$
are distinct. Next we deduce from   assertion $(ii)$ of Proposition $\ref{Proposition:EquivalenceUnitEqnHyperplans}$ for the space $L_{k,j}$ (with $n$ replaced by $s$) that $\Etilde\cap L_{k,j}$ is contained in a finite union of hyperplanes of $L_{k,j}$, the dimension of which is $s-1=n-(k+1)-1$. 
Denote by $\{L_{k+1,\ell}   \; | \;   \ell\in J_{k+1} \}$   the set of all these hyperplanes of the subspaces $L_{k,j}$ where $j$ ranges over  $J_k$. 
The truth of the assertion $(*)$ for $k+1$ follows. Finally the truth of $(*)$ with $k=n-1$ implies that $\Etilde$ is contained in the finite union of the subspaces $L_{n-1,j}$, ($j\in J_{n-1}$), where each $L_{n-1,j}$ has dimension $0$, hence is a point, and the finiteness of $\Etilde$ follows. 

\end{proof}

 \begin{rem}{\rm 
Assertion $(ii)$ of Proposition $\ref{Proposition:EquivalenceUnitEqnHyperplans}$ is different from Corollary 2.4.3 of Vojta in 
\cite{MR883451}: 
our hyperplanes $H_i$ are replaced by hypersurfaces, and Vojta's conclusion is that the set of $S$--integral points on the complement is 
degenerate (contained in a hypersurface). 
Vojta deduces his result from a more general result (Theorem 2.4.1 of 
\cite{MR883451}), 
according to which the set of $D$--integral points on a variety $V$ is degenerate, when $D$ is a divisor which is a sum of at least $\dim V + \varrho + r+1$ distinct prime divisors $D_i$. Here, $r$ is the rank of the group of rational points on the variety $\Pic^0(V)$ and $\varrho$ is the Picard number of $V$. For $\P^n(K)$ we have $r=0$ and $\varrho=1$. 
The proof of that result boils down  to the unit equation considered in assertion (i) 
of Proposition $\ref{Proposition:EquivalenceUnitEqnHyperplans}$, so again    Schmidt’s Subspace Theorem comes into play.

There are generalizations and improvements to Theorem 2.4.1 of \cite{MR883451}  given by Vojta in \cite{V2}, Corollary 0.3  and by Noguchi and Winkelmann in \cite{NW}.

}
\end{rem}

\section{Potpourri}\label{S:potpourri}

We conclude by including a few remarks which originated from comments we received on a preliminary version of the paper. 

 
 \begin{proposition}
 The assertion $(v)$ of Proposition $\ref{Proposition:equivalence}$ implies the assertion $(i)$.
 \end{proposition}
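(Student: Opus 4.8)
Since the implications $(v)\Longrightarrow(iii)$ and $(iii)\Longrightarrow(i)$ have already been established, the statement follows at once; the point, in the spirit of this section, is rather to give a \emph{direct} route from $(v)$ to $(i)$, parallel to the proof of $(v)\Longrightarrow(iii)$ but applied to the Thue--Mahler data itself. Assume $(v)$, and let $S$, $k\in K^\times$ and $F$ be as in $(i)$, with $F$ written as in $(\ref{Equation:H})$ for three distinct roots $\alpha_1,\alpha_2,\alpha_3\in K$ and a homogeneous factor $H$ of degree $n-3$. I would first enlarge $S$ to the finite set $S'$ obtained by adjoining all places dividing $(k)$, $(\denominator)$, $(\alpha_i)$, $(\alpha_i-\alpha_j)$ and the coefficients of $\denominator H$. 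For a solution $(x,y,\varepsilon)\in\OS^2\times\OS^\times$ I set $\beta_i=x-\alpha_iy$; exactly as in the proof of $(iii)\Longrightarrow(i)$, the identity $\beta_1\beta_2\beta_3H(x,y)=k\varepsilon$ exhibits an $S'$--unit as a product of $S'$--integers, so each $\beta_i$ is an $S'$--unit. Since the $\alpha_i-\alpha_j$ are $S'$--units as well, the quantities $a=(\alpha_2-\alpha_3)\beta_1$, $b=(\alpha_3-\alpha_1)\beta_2$, $c=(\alpha_1-\alpha_2)\beta_3$ are $S'$--units, and eliminating $x,y$ gives Siegel's relation $(\ref{Equation:unites})$, that is $a+b+c=0$.

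The key step is to introduce a free unit parameter so as to invoke the genuinely two--dimensional statement $(v)$ rather than merely the line $\{Z_0+Z_1+Z_2=0\}$ on which $(a:b:c)$ already sits. On $\P^2(K)$ with coordinates $(Z_0:Z_1:Z_2)$, I take the four hyperplanes $Z_0=0$, $Z_1=0$, $Z_2=0$, $Z_1+Z_2=0$ --- the very configuration used in the proof of $(v)\Longrightarrow(iii)$. For each solution and each $t\in\OSprime^\times$, the point $P_t=(t:a:b)$ is an $S'$--integral point of the complement of these four hyperplanes, because $t$, $a$, $b$ and $a+b=-c$ are all $S'$--units. Applying $(v)$ over $S'$, I obtain finitely many lines $\ell_1,\dots,\ell_N$ of $\P^2(K)$ containing every such point $P_t$.

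It then remains to recover $(x:y)$. Assuming, after a further harmless enlargement of $S'$, that $\OSprime^\times$ is infinite, the points $\{P_t : t\in\OSprime^\times\}$ attached to a fixed solution form an infinite set, so some $\ell_i$ must contain two of them; as the line joining $(t_1:a:b)$ and $(t_2:a:b)$ passes through $(1:0:0)$, that $\ell_i$ is forced to be $bZ_1-aZ_2=0$. Since there are only finitely many lines $\ell_i$, the ratio $a:b$ --- and hence, after dividing by the fixed nonzero factors $\alpha_i-\alpha_j$, the ratio $\beta_1:\beta_2$ --- takes only finitely many values. As $(x:y)\mapsto(x-\alpha_1y:x-\alpha_2y)$ is a projective automorphism of $\P^1$ (its matrix has determinant $\alpha_1-\alpha_2\neq0$), the point $(x:y)$ takes finitely many values; since every $S$--solution is an $S'$--solution, this is precisely assertion $(i)$ for $S$.

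The main obstacle, and the reason the parameter $t$ cannot be dispensed with, is that the triple $(a:b:c)$ alone always lies on the single hyperplane $Z_0+Z_1+Z_2=0$; without $t$ one would be using only the one--dimensional statement $(iv)$ (equivalently $(iii)$), not $(v)$. Spreading each solution along a whole line through $(1:0:0)$ is exactly what turns the finite list of lines furnished by $(v)$ into finiteness of the single ratio $a:b$.
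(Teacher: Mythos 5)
Your proof is correct, but it takes a genuinely different route from the paper's own proof (due to Corvaja). Your opening remark is accurate: since Proposition \ref{Proposition:equivalence} already establishes $(v)\Rightarrow(iii)$ and $(iii)\Rightarrow(i)$, the proposition follows formally, and your direct argument is essentially an inlining of that composition: Siegel's identity $(\ref{Equation:unites})$ turns a solution into the unit point $(a:b:c)$, you then re-use the four-hyperplane configuration $Z_0=0$, $Z_1=0$, $Z_2=0$, $Z_1+Z_2=0$ from the paper's proof of $(v)\Rightarrow(iii)$ with an auxiliary unit parameter $t$, and a geometric pigeonhole (two distinct points $(t_1:a:b)$, $(t_2:a:b)$ span a line through $(1:0:0)$, which must be $bZ_1-aZ_2=0$) recovers the finiteness of $(a:b)$, hence of $(\beta_1:\beta_2)$ and of $(x:y)$. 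All steps check out: your enlarged $S'$ does make each $\beta_i$, hence $a$, $b$, $c$, an $S'$--unit, and $t\mapsto(t:a:b)$ is injective, so the pigeonhole applies. The paper's proof is shorter and conceptually more direct: it never passes through Siegel's identity, but instead views each solution itself as the point $(x:y:\varepsilon)\in\P^2(K)$, removes the four hyperplanes $E=0$, $X-\alpha_1Y=0$, $X-\alpha_2Y=0$, $X-\alpha_3Y=0$, and lets the intrinsic $\OS^\times$--action $(x,y,\varepsilon)\mapsto(\eta x,\eta y,\eta^m\varepsilon)$ on solutions (where $m=\deg F$) play the role of your free parameter $t$; writing the product of the exceptional linear forms furnished by $(v)$ as $P=\sum_j P_j(X,Y)E^j$, the exponents $(\deg P-j)+mj$ of $\eta$ are pairwise distinct, so every $P_j(x,y)$ vanishes, and the solution points $(x:y)$ lie among the finitely many projective zeros of some nonzero binary form $P_{j_0}$. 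Thus Corvaja's argument exhibits the Thue--Mahler assertion $(i)$ itself as an integral-point statement on $\P^2(K)$ minus four lines, with a smaller enlargement of $S$ (no need to invert the differences $\alpha_i-\alpha_j$); your argument, at the cost of the detour through Siegel's identity, yields a more geometric conclusion and an explicit count, since the number of classes of solutions is bounded by the number of exceptional lines furnished by $(v)$.
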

 
 \begin{proof}[Proof {\rm (after P. Corvaja)}]
Let $(X:Y:E)$ be a system of projective coordinates on $\P^2(K)$. Denote by $\calE$ the set of solutions $(x,y,\varepsilon)$ in $\OS^2\times\OS^\times$ of the equation $F(X,Y)=kE$, where $F$ is given by $(\ref{Equation:H})$. Consider the four hyperplanes $H_0$, $H_1$, $H_2$, $H_3$ of $\P^2(K)$ of equations respectively given by 
 $$
 E=0,\quad X-\alpha_1Y=0,\quad X-\alpha_2Y=0,\quad X-\alpha_3Y=0.
 $$
 Let $S'$ be the set obtained by adding to $S$ the places of $K$ dividing numerators and denominators of the fractional principal ideals $(k)$, $(\alpha_1)$, $(\alpha_2)$ and $(\alpha_3)$. Then for each $(x,y,\varepsilon)\in\calE$, the point in $\P^2(K)$ with projective coordinates $(x:y:\varepsilon)$ is an $S'$--integral point of $\P^2(K)\setminus (H_0\cup H_1\cup H_2\cup H_3)$. From Proposition $\ref{Proposition:equivalence}$ $(v)$ we deduce that there exists a non--zero homogeneous polynomial $P\in K[X,Y,E]$ such that $P(x,y,\varepsilon)=0$ for all $(x,y,\varepsilon)\in\calE$. For any $(x,y,\varepsilon)\in\calE$ and any $\eta\in\OS^\times$, we have $(\eta x,\eta y,\eta^m\varepsilon)\in\calE$, hence $P(\eta x,\eta y,\eta^m \varepsilon)=0$. Since $P$ is homogeneous, assuming (without loss of generality) that $\OS^\times$ is infinite, we deduce $P(x,y,E)=0$. Therefore the set of points $(x : y) \in \P^1(K)$
such that there exists $\varepsilon\in \OS^\times$ with $(x,y,\varepsilon) \in \calE$ is finite.
 \end{proof}
 
\begin{proposition}
The assertion $(v)$ of Proposition $\ref{Proposition:equivalence}$ implies the assertion $(iii)$.
 \end{proposition}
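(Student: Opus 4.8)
The plan is to give a direct argument in the spirit of the preceding proof (after P.~Corvaja), rather than merely to quote the two--line shortcut ``$(v)\Longrightarrow(i)$ by the previous proposition, and $(i)\Longrightarrow(ii)\Longrightarrow(iii)$ by Proposition~\ref{Proposition:equivalence}''. The underlying difficulty is that assertion $(v)$ is intrinsically a statement about $\P^2(K)$, whereas the unit equation $\varepsilon_1+\varepsilon_2=1$ naturally lives on $\P^1(K)$. So the first task is to manufacture, out of each solution, a whole $\OS^\times$--family of $S$--integral points of a suitable $\P^2(K)$ minus four hyperplanes, and then to exploit the resulting homogeneity.

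Concretely, I would fix a system of projective coordinates $(X:Y:Z)$ on $\P^2(K)$ and take the four hyperplanes $X=0$, $Y=0$, $X+Y=0$ and $Z=0$. Let $\calE$ be the set of $(\varepsilon_1,\varepsilon_2)\in\OS^\times\times\OS^\times$ with $\varepsilon_1+\varepsilon_2=1$. For each such pair and each $\eta\in\OS^\times$, I would consider the point $(\varepsilon_1:\varepsilon_2:\eta)$. The key verification is that this is an $S$--integral point lying off all four hyperplanes: the four linear forms take the values $\varepsilon_1$, $\varepsilon_2$, $\varepsilon_1+\varepsilon_2=1$ and $\eta$, each of which is an $S$--unit, so none of them vanishes and, at every place $v\notin S$, the reduction avoids the four corresponding hyperplanes over the residue field. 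Applying $(v)$ to this set of points then yields a non--zero homogeneous polynomial $P(X,Y,Z)\in K[X,Y,Z]$ with $P(\varepsilon_1,\varepsilon_2,\eta)=0$ for all $\eta\in\OS^\times$ and all $(\varepsilon_1,\varepsilon_2)\in\calE$.

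The final step is the descent from $\P^2(K)$ back to $\P^1(K)$. Assuming (harmlessly, for otherwise $(iii)$ is trivial) that $\OS^\times$ is infinite, I would fix $(\varepsilon_1,\varepsilon_2)\in\calE$ and regard $P$ as a one--variable polynomial in $Z$; it then has infinitely many roots $\eta$, hence vanishes identically, so every coefficient $c_j(X,Y)$ in $P=\sum_j c_j(X,Y)Z^j$ satisfies $c_j(\varepsilon_1,\varepsilon_2)=0$. Since $P\neq 0$, at least one $c_j$ is a non--zero binary form, which has only finitely many zeros in $\P^1(K)$; therefore the projective points $(\varepsilon_1:\varepsilon_2)$ range over a finite set. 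As the normalization $\varepsilon_1+\varepsilon_2=1$ singles out a unique representative in each such class, the set $\calE$ is itself finite, which is exactly $(iii)$. The one point requiring care --- and the main obstacle --- is precisely this last passage: assertion $(v)$ only confines the points to finitely many \emph{hyperplanes} of $\P^2(K)$, and one must convert that into finiteness of the $\P^1(K)$--classes; the free coordinate $Z=\eta$ is what forces $P$ to degenerate into a non--trivial relation between $\varepsilon_1$ and $\varepsilon_2$ alone.
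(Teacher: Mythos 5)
Your proof is correct, but it is not the approach the paper uses for this particular proposition; instead, up to permuting the coordinates, it reproduces the argument the paper already gave for the implication $(v)\Longrightarrow(iii)$ inside the proof of Proposition $\ref{Proposition:equivalence}$, where the point is $(\varepsilon:\varepsilon_1:\varepsilon_2)$ with a free unit $\varepsilon\in\OS^\times$ and the hyperplanes are $E=0$, $E_1=0$, $E_2=0$, $E_1+E_2=0$; you even spell out the final step (a non--zero coefficient $c_j$ of $Z^j$ is a binary form with finitely many projective zeros, and the normalization $\varepsilon_1+\varepsilon_2=1$ picks exactly one representative in each projective class), which the paper compresses into ``whereupon the assertion $(iii)$ is true''. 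The paper's own two proofs of the present proposition (both after U.~Zannier) run differently: Proof~1 combines \emph{two} solutions $(\varepsilon_1,\varepsilon_2)$ and $(\eta_1,\eta_2)$ of the unit equation via the identity $1-\varepsilon_2-\varepsilon_1\eta_2=\varepsilon_1\eta_1$, so that $(1:-\varepsilon_2:-\varepsilon_1\eta_2)$ is an $S$--integral point off the four hyperplanes $X_0=0$, $X_1=0$, $X_2=0$, $X_0+X_1+X_2=0$, and then derives a contradiction from the vanishing polynomial; Proof~2 embeds $\P^1(K)\times\P^1(K)$ as a quadric in $\P^3(K)$ and projects to $\P^2(K)$. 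The difference is not purely cosmetic: your configuration $X=0$, $Y=0$, $X+Y=0$, $Z=0$ contains three concurrent lines (all through $(0:0:1)$), so you invoke $(v)$ for a degenerate quadruple of hyperplanes, whereas Zannier's quadruple is in general position, i.e.\ his proofs consume only the weaker, general--position case of $(v)$ --- the form that comes most directly out of Vojta's theorem. Since the paper states $(v)$ for arbitrary four hyperplanes, your application is legitimate; what the Zannier proofs buy is that they would survive even if $(v)$ were only known in general position, at the cost of the slightly less transparent trick of multiplying two solutions (or of the geometric detour through $\P^3$).
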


\begin{proof}[Proof 1 {\rm (after U. Zannier)}]
 Assume that the set $\calE$ of $(\varepsilon_1,\varepsilon_2)\in(\OS^\times)^2$ such that $\varepsilon_1+\varepsilon_2=1$ is infinite. Let $(\varepsilon_1,\varepsilon_2)$ and $(\eta_1,\eta_2)$ be two elements in $\calE$. From $\varepsilon_1+\varepsilon_2=1$ and $\eta_1+\eta_2=1$ one deduces 
 $$
 1-\varepsilon_2-\varepsilon_1\eta_2=\varepsilon_1\eta_1.
 $$
 Hence the point with projective coordinates $( 1:-\varepsilon_2:-\varepsilon_1\eta_2)$ is an $S$--integral point of $\P^2(K)\setminus(H_0\cup H_1\cup H_2\cup H_3)$, where $H_0$, $H_1$, $H_2$, $H_3$ are the hyperplanes of equations respectively given by  
 $$
 X_0=0,\quad X_1=0,\quad X_2=0,\quad X_0+X_1+X_2=0.
 $$ 
 Assume now the truth of assertion $(v)$ of Proposition $\ref{Proposition:equivalence}$: there exists a non--zero polynomial $P\in K[X,Y]$ such that $P(\varepsilon_2,\varepsilon_1\eta_2)=0$ for all $((\varepsilon_1,\varepsilon_2),(\eta_1,\eta_2))\in\calE^2$. Since $\calE$ is infinite, there are infinitely many $\eta_2$, hence the polynomial $P(\varepsilon_2,\varepsilon_1 T)$ is the zero polynomial, which implies $P(\varepsilon_2,Y)=0$, and since there are infinitely many $\varepsilon_2$, we obtain the contradiction $P=0$. 
 \end{proof}

\begin{proof}[Proof 2 {\rm (Geometrical proof, after U. Zannier)}]
The map 
 $$
\bigl( (X_0:X_1)\; , (Y_0:Y_1) \bigr)\longmapsto  (X_0Y_0 \, : \, X_1Y_0 \, : \,X_0Y_1 \, : \, X_1Y_1) 
$$
is a quadratic embedding of the square $\P^1(K)\times \P^1(K)$ in $\P^3(K)$ (with a system of projective coordinates $(T_0: T_1: T_2: T_3) $) as the quadratic surface $\calS$ of equation $T_0T_3=T_1T_2$. The image of $(\P^1(K)\setminus\{{\bf 0}, {\bf 1}, {\boldsymbol \infty}\})\times (\P^1(K)\setminus\{{\bf 0}, {\bf 1}, {\boldsymbol \infty}\})$ is $\calS$ minus the intersection of $\calS$ with the union of the six lines $L_0$, $L_1$, $L_2$ and $M_0$, $M_1$, $M_2$ of equations respectively given by 
$$
T_0=T_2=0, \quad T_1=T_3=0,\quad T_1-T_0=T_3-T_2=0
$$
and
$$
T_0=T_1=0, \quad T_2=T_3=0,\quad T_2-T_0=T_3-T_1=0.
$$
The point of intersection of $L_0$ and $M_0$ is $(0:0:0:1)$. 
The map $$
(t_0:t_1:t_2:1)\longmapsto (t_0:t_1:t_2)
$$ 
is a projection from 
$\P^3(K)\setminus \{(0:0:0:1)\}$ onto $\P^2(K)$. The projections in $\P^2(K)$ of the lines $L_1$, $L_2$, $M_1$, $M_2$ are four different lines and we apply the assertion $(v)$ to the complement of these four lines in $\P^2(K)$. Let $\calE$ be the set of $\varepsilon$ in $\OS^\times$ such that $1-\varepsilon$ is in $\OS^\times$. For $(\varepsilon,\eta)\in\calE^2$, the point $(\varepsilon\eta:\varepsilon:\eta)$ is an $S$--integral point of $\P^2(K)$ minus these four lines, hence there is a homogeneous polynomial which vanishes on all the points $(\varepsilon\eta,\varepsilon,\eta)$ with $(\varepsilon,\eta)\in\calE^2$. It follows that $\calE$ is finite. 
 \end{proof}
 
 \begin{proposition}
 The assertion $(v)$ of Proposition $\ref{Proposition:equivalence}$ implies the assertion $(iv)$.
 \end{proposition}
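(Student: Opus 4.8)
The plan is to imitate the geometric (Segre) argument used just above for $(v)\Longrightarrow(iii)$, the only genuinely new ingredient being the reduction of assertion $(iv)$ to the finiteness of the set of $S$--units $u$ for which $1-u$ is again an $S$--unit. First I would normalise the problem: given a set $A$ of $S$--integral points of $\P^1(K)$ minus three points, a projective automorphism of $\P^1$ carries the three omitted points to $ {\bf 0}, {\bf 1},{\boldsymbol \infty}$ of $(\ref{Equation:ZeroUnInfini})$, and after enlarging $S$ by the finitely many places involved (as is allowed throughout), $S$--integrality is preserved. Exactly as in the proof of $(iii)\Longrightarrow(iv)$ given above, each point of $A$ then has coordinates $(u:1)$ with $u$ and $1-u$ in $\OS^\times$. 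Hence it suffices to prove that the set
$$
\calE=\{\,u\in\OS^\times \mid 1-u\in\OS^\times\,\}
$$
is finite, for then $A$ is finite.

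Next I would introduce a genuinely two--parameter family, since $(v)$ only yields containment in finitely many lines, and a one--parameter family might already lie on a single line. For $(u,u')\in\calE\times\calE$ I consider the point
$$
P_{u,u'}=(uu':u:u')\in\P^2(K),
$$
in a system of coordinates $(X_0:X_1:X_2)$, and remove from $\P^2(K)$ the four distinct lines of equations
$$
X_1=0,\quad X_2=0,\quad X_0-X_1=0,\quad X_0-X_2=0.
$$
Evaluating these four linear forms at $P_{u,u'}$ gives $u$, $u'$, $u(u'-1)$ and $u'(u-1)$; since $u,u',1-u,1-u'$ all lie in $\OS^\times$, each of these values is an $S$--unit, so $P_{u,u'}$ is an $S$--integral point of $\P^2(K)$ minus these four lines.

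Then I would apply assertion $(v)$ of Proposition $\ref{Proposition:equivalence}$: the points $P_{u,u'}$ lie in a finite union of lines, so there is a non--zero homogeneous polynomial $P\in K[X_0,X_1,X_2]$ with $P(uu',u,u')=0$ for all $(u,u')\in\calE^2$. If $\calE$ were infinite, this would force the polynomial $P(XY,X,Y)\in K[X,Y]$ to vanish on a product of two infinite sets, hence to be the zero polynomial; and because $P$ is homogeneous each monomial $X^aY^b$ of $P(XY,X,Y)$ arises from a single monomial of $P$ (the exponents $i+j+k=d$, $i+j=a$, $i+k=b$ determine $i,j,k$), so $P(XY,X,Y)=0$ forces $P=0$, a contradiction. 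Therefore $\calE$ is finite, and so is $A$.

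The main obstacle, as in Zannier's proofs above, is exactly the passage from the \emph{degenerate} conclusion of $(v)$ (containment in finitely many hyperplanes) to genuine finiteness: this is what compels the use of the two--parameter family $\calE\times\calE$ together with the specialisation argument, the homogeneity of $P$ being precisely what rules out a nontrivial relation among $uu'$, $u$ and $u'$. The remaining steps---the reduction to $\calE$ and the verification that $P_{u,u'}$ is $S$--integral after the harmless finite enlargement of $S$---are routine.
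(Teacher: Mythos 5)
Your proof is correct, but it follows a genuinely different route from the paper's two proofs of this implication (both after P.~Corvaja). In effect you factor the implication as $(v)\Longrightarrow(iii)\Longrightarrow(iv)$: you first reduce $(iv)$ to the finiteness of $\calE=\{u\in\OS^\times \mid 1-u\in\OS^\times\}$, which is exactly assertion $(iii)$, and then you establish that finiteness via the multiplicative family $(uu':u:u')$ with $(u,u')\in\calE\times\calE$ and the four lines $X_1=0$, $X_2=0$, $X_0-X_1=0$, $X_0-X_2=0$ --- precisely the configuration of Zannier's geometric proof of $(v)\Longrightarrow(iii)$ appearing elsewhere in this section of the paper. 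The paper's own proofs are instead direct: Proof~1 takes the points $(1:\varepsilon:\eta)$, where $(1:\varepsilon)$ runs over the given set and $\eta$ runs over all of $\OS^\times$, removing the four lines $X_0=0$, $X_1=0$, $X_2=0$, $X_1-X_0=0$; Proof~2 takes the points $(1:\varepsilon_1:\varepsilon_2)$ with both coordinates drawn from the given set, but must remove five lines, so it invokes $(v)$ only a fortiori (for four of the five). Your variant combines the advantages of both: a genuine two-parameter family drawn from the set itself, yet with exactly four hyperplanes so that $(v)$ applies verbatim; moreover you avoid the (harmless) auxiliary assumption that $\OS^\times$ is infinite which Proof~1 implicitly needs, and you make explicit the algebraic step the paper leaves implicit, namely that homogeneity of $P$ makes the substitution $(X_0,X_1,X_2)\mapsto(XY,X,Y)$ injective on monomials of a fixed degree, so that $P(XY,X,Y)=0$ forces $P=0$. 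All the supporting steps (normalization of the three removed points at the cost of a finite enlargement of $S$, verification of the $S$-integrality of $P_{u,u'}$, and the specialization over the infinite product set) are sound.
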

 
 
 \begin{proof}[Proof 1 {\rm (after P. Corvaja)}]
Let $E$ be a set of $S$--integral points of $\P^1(K)\setminus\{{\bf 0}, {\bf 1}, {\boldsymbol \infty}\}$. Take some systems of projective coordinates $(X_0:X_1)$ on $\P^1(K)$ and $(X_0:X_1:X_2)$ on $\P^2(K)$. Remove from $\P^2(K)$ the $4$ hyperplanes $H_0$, $H_1$, $H_2$, $H_3$ of equations $X_0=0$, $X_1=0$, $X_2=0$ and $X_1=X_0$. For any element in $E$ of projective coordinates $(1:\varepsilon)$ and for any $\eta\in\OS^\times$ with the property that $1-\varepsilon \in \OS^\times$, the point $(1:\varepsilon:\eta)$ is an $S$--integral point of $\P^2(K)\setminus\{H_0\cup H_1\cup H_2\cup H_3\}$. Hence the set of these points is contained in an algebraic hypersurface, and we deduce that $E$ is finite. 
 \end{proof}

 \begin{proof}[Proof 2 {\rm (after P. Corvaja)}]
In $\P^2(K)$ consider the $5$ hyperplanes $H_0$, $H_1$, $H_2$, $H_3$, $H_4$ of equations $X_0=0$, $X_1=0$, $X_2=0$, $X_1=X_0$, $X_2=X_0$. Let $E$ be the set of $\varepsilon\in K^\times$ such that $(1:\varepsilon)$ is an $S$--integral point of $\P^1(K)\setminus\{{\bf 0}, {\bf 1}, {\boldsymbol \infty}\}$. Then for any pair $(\varepsilon_1,\varepsilon_2)$ of elements in $E\times E$, the point of projective coordinates $(1: 
\varepsilon_1:\varepsilon_2)$ is an $S$--integral point of $\P^2(K)\setminus\{H_0\cup H_1\cup H_2\cup H_3\cup H_4\}$, hence $E\times E$ is contained in an algebraic hypersurface, and it follows that $E$ is finite. 
 \end{proof}

\begin{proposition}\label{proposition:Zannier}
Let $n$ and $t$ be integers with $1\le t < n$.
 The truth of assertion $(i)$ of Proposition $\ref{Proposition:EquivalenceUnitEqnHyperplans}$ for $n$ implies the truth of the result for $n-t$.
\end{proposition}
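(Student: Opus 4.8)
The plan is to prove the one–step reduction ``assertion $(i)$ for the integer $m+1$ implies assertion $(i)$ for the integer $m$'', valid for every $m\ge 1$, and then to iterate it $t$ times: starting from the hypothesis for $n$, one descends through $n-1,n-2,\dots$ down to $n-t$, each intermediate statement becoming available as the hypothesis for the next step since the reduction is uniform in $m$ and in $S$. I would argue by contradiction: supposing that, for some finite set $S$, the equation $E_0+\cdots+E_m=0$ has infinitely many classes modulo $\OS^\times$ of nondegenerate solutions, I would manufacture infinitely many classes of nondegenerate solutions of $E_0+\cdots+E_{m+1}=0$ over a slightly enlarged set $S'$, contradicting assertion $(i)$ of Proposition~\ref{Proposition:EquivalenceUnitEqnHyperplans} for $m+1$.

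The construction splits the last coordinate. Fixing two elements $c_0,c_1\in K$ with $c_0+c_1=1$ and $c_0,c_1\notin\{0,1\}$, I would send a solution $(\varepsilon_0,\dots,\varepsilon_m)$ to $(\varepsilon_0,\dots,\varepsilon_{m-1},c_0\varepsilon_m,c_1\varepsilon_m)$; since $c_0\varepsilon_m+c_1\varepsilon_m=\varepsilon_m$, the new $(m+2)$-tuple again sums to zero. One checks that, because $c_0,c_1\notin\{0,1\}$ and the original solution is nondegenerate, every subsum of the new tuple is automatically nonzero except possibly those of the ``mixed'' form $\sum_{i\in I}\varepsilon_i+c_0\varepsilon_m$ or $\sum_{i\in I}\varepsilon_i+c_1\varepsilon_m$, with $I$ a nonempty proper subset of $\{0,\dots,m-1\}$ (the subsums using none of, or both of, the two split coordinates, and the case $I=\{0,\dots,m-1\}$, reduce to proper subsums of the original equation or to $(c_0-1)\varepsilon_m$, hence are nonzero).

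The main obstacle is exactly these mixed subsums: for a single fixed template $(c_0,c_1)$ they can vanish for infinitely many solutions, so no one split will work. The observation that breaks the impasse is that each mixed subsum vanishes for exactly one value of $c_0$, namely $c_0=-\sum_{i\in I}\varepsilon_i/\varepsilon_m$ or $c_0=1+\sum_{i\in I}\varepsilon_i/\varepsilon_m$; hence for a given solution the number of ``bad'' values of $c_0$ is at most $2^{m+1}$, a bound independent of the solution. I would therefore fix once and for all a family of $2^{m+1}+1$ distinct elements $c_0^{(1)},c_0^{(2)},\dots$ of $K\setminus\{0,1\}$, set $c_1^{(k)}=1-c_0^{(k)}$, and enlarge $S$ to $S'$ so that all these finitely many constants are $S'$-units. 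Then each nondegenerate solution has at least one template from the family that produces a genuinely nondegenerate solution of the $(m+2)$-term equation over $\OSprime$.

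I would conclude by pigeonhole and a descent of the equivalence relation. Infinitely many $\OS^\times$-classes and finitely many templates force infinitely many classes to use one fixed template $(c_0^{(k)},c_1^{(k)})$. For that template the splitting map is injective on classes: if two of our solutions yield $\OSprime^\times$-equivalent tuples, the scaling factor equals $\varepsilon_0/\varepsilon_0'$, a ratio of genuine $S$-units, hence lies in $\OS^\times$, so the two solutions were already $\OS^\times$-equivalent. This produces infinitely many $\OSprime^\times$-classes of nondegenerate solutions of $E_0+\cdots+E_{m+1}=0$, contradicting assertion $(i)$ for $m+1$ applied to $S'$. The delicate point to handle with care is this last descent across $S\subseteq S'$: it works precisely because the coordinates of the original solutions are honest $S$-units rather than merely $S'$-units, so that passage to the larger set cannot collapse distinct classes.
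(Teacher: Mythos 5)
Your proof is correct, but it is not the paper's argument: the mechanism you use to guarantee nondegeneracy of the lifted solutions is genuinely different. The paper reduces from $n$ to $n-t$ in a single step. It multiplies the relation $\varepsilon_0+\cdots+\varepsilon_{n-t}=0$ by one carefully chosen element $\gamma\in\OS\setminus\OS^\times$ satisfying $r/\gamma\notin\OS$ for $r=1,\dots,t$, and rewrites $\gamma\varepsilon_{n-t}$ as $(\gamma-t)\varepsilon_{n-t}+\varepsilon_{n-t}+\cdots+\varepsilon_{n-t}$ ($t$ copies), obtaining a relation with $n+1$ terms that are $S'$--units, where $S'$ adds to $S$ the places dividing $\gamma(\gamma-t)$. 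The divisibility condition on $\gamma$ kills all mixed subsums uniformly: a vanishing subsum would force $s/\gamma\in\OS$ for some $0\le s\le t$. Thus a single ``template'' works for every solution at once, giving a direct, injective-on-classes map with no contradiction argument, no pigeonhole, and one enlargement of $S$. You instead iterate a one-step reduction ($m+1\Rightarrow m$), split the last coordinate as $c_0\varepsilon_m+c_1\varepsilon_m$ with $c_0+c_1=1$, and defeat the mixed subsums $\sum_{i\in I}\varepsilon_i+c_j\varepsilon_m$ by counting: each solution excludes at most $2^{m+1}$ values of $c_0$, so among $2^{m+1}+1$ fixed templates some template is good for infinitely many classes. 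Both proofs share the same final descent — the scaling factor between two lifted tuples is a ratio of genuine $S$--units, hence lies in $\OS^\times$ — which the paper compresses into the words ``hence modulo $\OS^\times$'' and you rightly spell out. As for what each approach buys: your route avoids having to construct $\gamma$ with prescribed divisibility properties (so it would adapt to settings where such elements are unavailable, e.g. when the unit group is replaced by another finite-rank subgroup of $K^\times$), but it pays with a contradiction/pigeonhole step which, made quantitative, loses a factor $2^{m+1}+1$ in the class count at each of the $t$ iterations and enlarges $S$ repeatedly; the paper's arithmetic choice of $\gamma$ yields a cleaner one-shot reduction in which the number of classes for $n-t$ over $S$ is directly bounded by the number of classes for $n$ over a single $S'$.
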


\begin{proof}[Proof {\rm (after U. Zannier)}]
Denote by $\calE$ the set of $(\varepsilon_0,\dots,\varepsilon_{n-t})$ in $(\OS^\times)^{n-t+1}$ satisfying 
\begin{equation}\label{Equation:nmoinst}
\varepsilon_0+\cdots+\varepsilon_{n-t}=0
\end{equation}
with the non--vanishing of any proper subsum of the left hand side. Let $\gamma$ be an element in $\OS\setminus\OS^\times$,  with the property that  $r/\gamma \not\in \OS$ for $r=1, \dots, t$. Let $S'$ be the set obtained by adding to $S$ the places of $K$ dividing $\gamma(\gamma-t)$. Write the left hand side of $(\ref{Equation:nmoinst})$ as 
$$
\gamma\varepsilon_0+\cdots+ \gamma\varepsilon_{n-t-1}+(\gamma-t)\varepsilon_{n-t}+
\underbrace{\varepsilon_{n-t}+ \cdots +\varepsilon_{n-t}}_{t \; {\rm times}}
$$
and consider it  as a sum of 
$n+1$ elements which are $S'$--units of $K$. 

That no proper subsum is $0$ follows from the following four  remarks:

(i) Since a proper subsum of the sum in $(\ref{Equation:nmoinst})$ cannot be $0$, for any non--empty subset $\{i_1,\ldots,i_m\}$ of $\{0,\dots,n-t-1\}$, we have $\gamma(\varepsilon_{i_1}+\cdots +\varepsilon_{i_m} ) \not= 0$. 

(ii) For $s \geq 0 $, we have $(\gamma -s) \varepsilon_{n-t} \not = 0$.

(iii) Since $s/\gamma $ is not an $S$-integer, for any non--empty subset $\{i_1,\ldots,i_m\}$ of $\{0,\dots,n-t-1\}$ and any $0\leq s \leq t$, we have 
$\gamma( \varepsilon_{i_1} + \cdots + \varepsilon_{i_m}+\varepsilon_{n-t} ) \not = s\varepsilon_{n-t}$. 

(iv) For the same reason, for any non--empty subset $\{i_1,\ldots,i_m\}$ of $\{0,\dots,n-t-1\} $ and any $0\leq s \leq t$, we have 
$\gamma( \varepsilon_{i_1} + \cdots + \varepsilon_{i_m}+\varepsilon_{n-t} ) + s\varepsilon_{n-t}\not = 0$.

\par
Assuming that assertion $(i)$ of Proposition $\ref{Proposition:EquivalenceUnitEqnHyperplans}$ is true for $n$, it follows that $\calE$ is a union of finitely many equivalent classes modulo $\OSprime^\times$, hence modulo $\OS^\times$.
\end{proof}

\begin{proposition}
Let $n$ and $t$ be integers satisfying  $1\le t < n$.
 The truth of assertion $(ii)$ of Proposition $\ref{Proposition:EquivalenceUnitEqnHyperplans}$ for $n$ implies the truth of the result for $n-t$.
\end{proposition}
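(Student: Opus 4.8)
The plan is to adapt the argument used by P. Corvaja in Proof 1 of the implication $(v)\Longrightarrow(iv)$ above, which is exactly the case $n=2,\ t=1$ of the present statement: one feeds the lower-dimensional integral points into $\P^n(K)$ after adjoining $t$ free $S$-unit parameters, applies assertion $(ii)$ of Proposition \ref{Proposition:EquivalenceUnitEqnHyperplans} in dimension $n$, and then reads off the conclusion fibrewise. A naive linear embedding of $\P^{n-t}(K)$ into $\P^n(K)$ will not do, because the embedded subspace could be swallowed as one of the output hyperplanes, yielding no information; the free parameters are what force the output hyperplanes to descend.

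First I would fix an arbitrary set $A$ of $S$-integral points of $\P^{n-t}(K)\setminus(H_0'\cup\cdots\cup H_{n-t+1}')$, where $H_0',\dots,H_{n-t+1}'$ are $n-t+2$ distinct hyperplanes with $H_i'$ cut out by a linear form $L_i'$ in coordinates $(X_0:\cdots:X_{n-t})$. As usual I may enlarge $S$ so that $\OS^\times$ is infinite. Writing $\ell=L_{n-t}'$, I would send a point $P'=(x_0:\cdots:x_{n-t})\in A$ together with parameters $\eta_1,\dots,\eta_t\in\OS^\times$ to the point
$$
Q=\bigl(x_0:\cdots:x_{n-t}:\eta_1\ell(P'):\cdots:\eta_t\ell(P')\bigr)\in\P^n(K),
$$
which is well defined on projective classes. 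In $\P^n(K)$, with coordinates $(Z_0:\cdots:Z_n)$, I would use the $n+2$ hyperplanes given by $L_i'(Z_0,\dots,Z_{n-t})=0$ for $i=0,\dots,n-t+1$ and by $Z_{n-t+j}=0$ for $j=1,\dots,t$; these are pairwise distinct. After replacing $S$ by a suitable finite extension $S'$ (to absorb the coefficients involved and the comparison of coordinate systems), each such $Q$ is an $S'$-integral point of the complement of these hyperplanes: indeed $L_i'(Q)=L_i'(P')$ is an $S'$-unit because $P'$ avoids $H_i'$, while $\eta_j\ell(P')$ is an $S'$-unit because $\eta_j\in\OS^\times$ and $\ell(P')=L_{n-t}'(P')$ is an $S'$-unit.

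Now I would invoke assertion $(ii)$ of Proposition \ref{Proposition:EquivalenceUnitEqnHyperplans} in dimension $n$ to conclude that all these points $Q$ lie in a finite union of hyperplanes $G_1,\dots,G_N$ of $\P^n(K)$. The decisive step is fibrewise. Fix $P'\in A$; since $\ell(P')\neq0$ and $\OS^\times$ is infinite, hence Zariski-dense in $K^\times$, the points $Q$ obtained as $(\eta_1,\dots,\eta_t)$ ranges over $(\OS^\times)^t$ are Zariski-dense in the $t$-dimensional linear subspace $W_{P'}$ of $\P^n(K)$ consisting of the points whose last $t$ coordinates are arbitrary and whose first $n-t+1$ coordinates are proportional to $(x_0,\dots,x_{n-t})$. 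As $W_{P'}$ is irreducible and contained in the closed set $G_1\cup\cdots\cup G_N$, it lies in a single $G_\ell$. Imposing $W_{P'}\subseteq G_\ell$ forces the equation of $G_\ell$ to involve only $Z_0,\dots,Z_{n-t}$, say $\sum_{i=0}^{n-t}c_iZ_i=0$ with $(c_0,\dots,c_{n-t})\neq{\bf 0}$, and in addition forces $\sum_{i=0}^{n-t}c_ix_i=0$; that is, $P'$ lies on the genuine hyperplane $\overline{G_\ell}=\{\sum_{i=0}^{n-t}c_iX_i=0\}$ of $\P^{n-t}(K)$. Only those $G_\ell$ whose equation does not involve $Z_{n-t+1},\dots,Z_n$ can arise this way, so $A$ is contained in the finite union of the corresponding hyperplanes $\overline{G_\ell}$, which is exactly assertion $(ii)$ in dimension $n-t$.

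I expect the main obstacle to be precisely this last fibrewise analysis, namely showing that the $t$ free $S$-unit parameters guarantee that each output hyperplane capturing a fibre $W_{P'}$ is ``vertical'' (independent of the last $t$ coordinates) and hence descends to an honest hyperplane of $\P^{n-t}(K)$. The remaining points are routine bookkeeping: the reduction to $\OS^\times$ infinite, the finite enlargement of $S$ needed to make $S$-integrality transfer and to compare systems of coordinates, and the verification that the $n+2$ hyperplanes in $\P^n(K)$ are distinct.
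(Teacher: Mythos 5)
Your proof is correct, and its engine is the same one driving the paper's proof (attributed to G.~R\'emond): adjoin free $S$--unit parameters as extra coordinates, apply assertion $(ii)$ of Proposition \ref{Proposition:EquivalenceUnitEqnHyperplans} in dimension $n$ to the original hyperplanes together with the coordinate hyperplanes of the new variables, and use the infinitude of $\OS^\times$ to force every exceptional hyperplane that swallows a fibre to be independent of the added coordinates, hence to descend to $\P^{n-t}(K)$. Where you genuinely differ is the preprocessing. The paper first runs the rank--reduction coordinate change from its proof of $(i)\Longrightarrow(ii)$, so that the given hyperplanes become coordinate hyperplanes plus a hyperplane $X_0+\cdots+X_{n-r}=0$ and the integral points acquire $S$--unit coordinates; only then does it append parameters $\eta_1,\dots,\eta_r$ and conclude via a homogeneous polynomial $Q$ (the product of the linear forms of the exceptional hyperplanes) whose specializations $Q(\varepsilon_0,\dots,\varepsilon_{n-r},X_1,\dots,X_r)$ vanish identically. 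You skip that normalization entirely: the twist by $\ell(P')$ makes the lift $\bigl(x_0:\cdots:x_{n-t}:\eta_1\ell(P'):\cdots:\eta_t\ell(P')\bigr)$ projectively well defined and $S'$--integral with respect to the original forms $L_i'$ themselves, so $(ii)$ for $n$ applies to an arbitrary configuration of $n-t+2$ distinct hyperplanes with no coordinate change; your fibrewise step (the $t$--plane $W_{P'}$ is irreducible, lies in a single $G_\ell$, contains the last $t$ coordinate points, hence $G_\ell$ is vertical and descends) is a geometric rendering of the paper's ``$Q$ does not depend on $X_1,\dots,X_r$,'' and is if anything justified more carefully. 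What your route buys: no appeal to the rank--reduction lemma (whose invocation in the paper is stated rather loosely, e.g.\ the asserted range of $r$), and a descent that visibly produces hyperplanes, not merely hypersurfaces, of $\P^{n-t}(K)$. What the paper's route buys: after normalization the integral points literally have $S$--unit coordinates, so integrality of the lifted points is immediate, whereas you must (and correctly do) enlarge $S$ to absorb the coefficients of the $L_i'$. One cosmetic slip: $\OS^\times$ is Zariski dense in the affine line over $K$ (being infinite), which is what your density argument uses; calling it ``Zariski-dense in $K^\times$'' is harmless but imprecise.
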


\begin{proof}[Proof {\rm (after G.~Rémond)}] 
Using the same argument as in the proof $(i)\Longrightarrow (ii)$ of Proposition $\ref{Proposition:EquivalenceUnitEqnHyperplans}$,
we deduce that there is a system of  projective coordinates $(X_0:\cdots :X_n)$ on $\P^n(K)$ and there is an integer $r$ in the range $1\le r \le t$ such that $(X_0:\cdots :X_{n-t})$ is a system of  projective coordinates on $\P^{n-t}(K)$ and $n-r+2$ of the given hyperplanes in $\P^{n-t}(K) $ are defined by the equations 
$X_0 =0$, $X_1 =0$, $\dots$, $X_{n-r} =0$ and $X_0 +\cdots +X_{n-r} =0$. Let ${\cal E}$ be the set of $S$–integral points on the complements in $\P^{n-r}(K) $ of these hyperplanes. 
Consider the hyperplanes $X_0 =0$, $X_1 =0$, $\dots$, $X_n =0$ and $X_0 +\cdots +X_{n-r} =0$
of $\P^n(K)$. Assuming that  assertion $(ii)$ of Proposition $\ref{Proposition:EquivalenceUnitEqnHyperplans}$  holds for $n$, we deduce that there exists a homogeneous polynomial
$Q$ in $n+1$ variables which vanishes at $(\varepsilon_0, \dots, \varepsilon_{n-r}, \eta_1, \dots , \eta_r)$ for all $(\varepsilon_0, \dots, \varepsilon_{n-r})$ in ${\cal E}$
and all $(\eta_1, \dots , \eta_r)$ in $(\OS^\times)^r$. If $\OS^\times$ is infinite, then the polynomial
$Q(\varepsilon_0,\dots,\varepsilon_{n-r},X_1, \dots , X_r) $ does not depend on $X_1, \dots , X_r$ and we deduce that assertion $(ii)$ of Proposition $\ref{Proposition:EquivalenceUnitEqnHyperplans}$ holds for $n - t$.
\end{proof}


The next proposition follows from Proposition $\ref{proposition:Zannier}$: we give another proof of it. 

\begin{proposition} 
The truth of assertion $(i)$ of Proposition $\ref{Proposition:EquivalenceUnitEqnHyperplans}$ for a fixed $n\ge 3$ implies the truth of the result for $n=2$.
\end{proposition}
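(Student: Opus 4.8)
The plan is to reduce assertion $(i)$ for $n=2$ to the classical $S$--unit equation $u+v=1$, and then to manufacture out of each of its solutions a non--degenerate solution of an $(n+1)$--term equation by expanding $(u+v)^{n-1}$, so that the hypothesis for the fixed $n\ge 3$ applies. First I would observe that a non--degenerate solution $(\varepsilon_0,\varepsilon_1,\varepsilon_2)$ of $E_0+E_1+E_2=0$, taken modulo $\OS^\times$, has a unique representative with $\varepsilon_0=1$; writing $u=-\varepsilon_1$ and $v=-\varepsilon_2$ turns the problem into the finiteness of the set of $(u,v)\in\OS^\times\times\OS^\times$ with $u+v=1$. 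So it suffices to prove that this set is finite.

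The key construction is the following. Since $u+v=1$, the binomial theorem gives $\sum_{k=0}^{n-1}\binom{n-1}{k}u^kv^{n-1-k}=1$, hence the homogeneous relation
$$
\binom{n-1}{0}v^{n-1}+\binom{n-1}{1}uv^{n-2}+\cdots+\binom{n-1}{n-1}u^{n-1}+(-1)=0,
$$
a sum of exactly $n+1$ terms (here $n-1\ge 2$, so the expansion is genuinely longer than the original relation). Enlarging $S$ to a fixed set $S'$ that also contains the finitely many places dividing the binomial coefficients $\binom{n-1}{k}$ makes every term an element of $\OSprime^\times$, so I would regard the tuple $(E_0,\dots,E_n)$ above as a solution in $(\OSprime^\times)^{n+1}$ of $E_0+\cdots+E_n=0$ and apply assertion $(i)$ of Proposition $\ref{Proposition:EquivalenceUnitEqnHyperplans}$ for the fixed $n\ge 3$ with the set $S'$.

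The main obstacle is checking non--degeneracy: I must rule out, for all but finitely many $(u,v)$, the vanishing of a proper subsum $\sum_{i\in I}E_i$ with $2\le|I|\le n$. Substituting $v=1-u$ turns such a subsum into a one--variable polynomial $\phi_I(u)$, and the point is that $\phi_I$ is never identically zero. When $n\notin I$ this follows because the Bernstein polynomials $u^k(1-u)^{n-1-k}$ are linearly independent and the coefficients $\binom{n-1}{k}$ are nonzero; when $n\in I$ I would use the identity $\bigl(u+(1-u)\bigr)^{n-1}=1$ to rewrite $\phi_I\equiv 0$ as the vanishing of a nonempty Bernstein combination, again impossible. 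Thus each $\phi_I$ has finitely many roots $u$, and since each root determines $v=1-u$, only finitely many solutions $(u,v)$ make some proper subsum vanish; for every other $(u,v)$ the tuple $(E_0,\dots,E_n)$ is non--degenerate.

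Finally I would show that $(u,v)\mapsto$ (the class of $(E_0,\dots,E_n)$ modulo $\OSprime^\times$) is injective on the non--degenerate solutions. If $E_i'=\lambda E_i$ for all $i$ with $\lambda\in\OSprime^\times$, then the term of index $n$, which equals $-1$ for every solution, forces $\lambda=1$; comparing $E_1/E_0=(n-1)u/v$ then recovers $u/v$, which together with $u+v=1$ determines $(u,v)$ uniquely. Hence finitely many classes for the fixed $n$ yield finitely many non--degenerate $(u,v)$, and adding the finitely many degenerate ones gives finiteness of the entire solution set of $u+v=1$. Translating back, this is exactly assertion $(i)$ of Proposition $\ref{Proposition:EquivalenceUnitEqnHyperplans}$ for $n=2$.
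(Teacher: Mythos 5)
Your proof is correct and takes essentially the same route as the paper's own proof (attributed to U.~Zannier): both raise the unit equation $u+v=1$ to the power $n-1$ and expand by the binomial theorem into an $(n+1)$--term $S'$--unit equation, where $S'$ adds the places dividing the binomial coefficients, observe that solutions with a vanishing proper subsum are roots of finitely many nonzero polynomials, and then invoke assertion $(i)$ for the fixed $n$. The only differences are cosmetic: you use the symmetric expansion $\sum_k\binom{n-1}{k}u^kv^{n-1-k}=1$ where the paper expands $(1-\varepsilon)^{n-1}$ in powers of $\varepsilon$, and you spell out the final counting step (the coordinate $-1$ forces the scaling factor $\lambda=1$, so each class modulo $\OSprime^\times$ yields at most one solution $(u,v)$), which the paper leaves implicit.
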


 \begin{proof}[Proof {\rm (after U. Zannier)}]
 %
Let $n\ge 3$. Set $m=n-1$. Let $\varepsilon\in\OS^\times$ satisfy $1-\varepsilon\in\OS^\times$. Write
$$
(1- \varepsilon)^m -m \varepsilon + \dots +(-1)^j \binom{m}{j} \varepsilon^j +\dots +(-1)^m \varepsilon^m=1.
$$
Let $S'$ denote the set obtained by adding to $S$ the places of $K$ dividing the binomial coefficients $\displaystyle \binom{m}{j}$ for $1\le j\le m-1$. The left hand side is a sum of $m+1$ terms which are $S'$--units. The set of $S$--units $\varepsilon$ for which there is a vanishing proper subsum is finite, namely it is the set of roots of finitely many polynomials of the form 
$$
u_0(1- E)^m -u_1 m E + \cdots +(-1)^j u_j \binom{m}{j} E^j 
+\cdots +(-1)^m u_m E^m,
$$
where $ u_t\in \{0,1\}$ for $t=0, \dots , m$. 
From the assumption that assertion $(i)$ of Proposition $\ref{Proposition:EquivalenceUnitEqnHyperplans}$ is true for $n$, we deduce that the set of these $S$--units $\varepsilon$ is finite. 
\end{proof}
 

\section*{Acknowledgements}

We received a number of comments on a preliminary version of this paper. We are grateful to 
Francesco Amoroso,
Pascal Autissier,
Pietro Corvaja, 
Jan-Hendrik Evertse, 
Kalman Gy\H{o}ry,
Gaël Rémond,
Paul Vojta,
Isao Wakabayashi 
and
Umberto Zannier
for the valuable suggestions they kindly gave us. 
Last and not least, we thank the anonymous referee for his generous and detailed  remarks and his   pertinent  suggestions.


\bigskip
 
 \noindent
{\sc Claude Levesque
}\\
{D\'{e}partement de math\'{e}matiques et de statistique,
}\\
{Universit\'{e} Laval,
}\\
{Qu\'{e}bec (Qu\'{e}bec),
}\\
{CANADA G1V 0A6
}\\
{\href{Claude.Levesque@mat.ulaval.ca}{\tt Claude.Levesque@mat.ulaval.ca}
 }

\bigskip

 \noindent
{\sc Michel Waldschmidt
}\\
{Institut de Math\'{e}matiques de Jussieu, 
}\\
{Universit\'{e} Pierre et Marie Curie (Paris 6),
}\\
{4 Place Jussieu, 
}\\
{F -- 75252 PARIS Cedex 05, FRANCE
}\\
{\href{miw@math.jussieu.fr}{\tt miw@math.jussieu.fr}
 }\\
{\href{http://www.math.jussieu.fr/~miw/}{\tt http://www.math.jussieu.fr/$\sim$miw/}
} 
 \vfill\vfill

 \end{document}